\documentclass[12pt,a4paper]{article}

\usepackage{graphicx}
\usepackage{amssymb}
\usepackage{amsmath}
\usepackage{epstopdf}

\usepackage{float}

\usepackage{caption}
\usepackage{subcaption}

\DeclareMathOperator*{\cA}{\mathcal A}
\DeclareMathOperator*{\cB}{\mathcal B}

\usepackage[margin=0.8in]{geometry} % for the margins (for a 12pt document, the default margin is 1.5 in)

\usepackage{subfloat}

\title{A Transform Pair for Doubly Connected Domains}

\author{Jesse J. Hulse}

\date{}

\graphicspath{{Fig/}}

\numberwithin{equation}{section}

\usepackage{pst-plot,booktabs,mathtools}
\usepackage{amsfonts, amsthm, xcolor}
\usepackage{graphicx}

\newcommand{\dd}{\mathrm{d}}
\newcommand{\ee}{\mathrm{e}}
\newcommand{\ci}{\mathrm{i}}

\DeclareMathOperator*{\Imag}{Im}
\DeclareMathOperator*{\Real}{Re}
\DeclareMathOperator*{\C}{\mathbb C}
\DeclareMathOperator*{\D}{\mathbb D}
\DeclareMathOperator*{\R}{\mathbb R}
\DeclareMathOperator*{\A}{\mathbb A}

\newtheorem{theorem}{Theorem}[section]

\newtheorem{example}[theorem]{Example}

\theoremstyle{definition}
\newtheorem{definition}[theorem]{Definition}

\theoremstyle{remark}

\usepackage{color}

\usepackage{subcaption}

\begin{document}

\maketitle

%\tableofcontents

\begin{center}
Department of Mathematics \\
University of Manitoba \\
Winnipeg, MB, R3T 2N2, CA
\end{center}

\vskip 0.5truein
\begin{center}
{\bf Subject Areas}
\end{center}
\begin{center}
  complex analysis, unified transform method, boundary value problems
\end{center}

\begin{center}
{\bf Keywords}
\end{center}
\begin{center}
 Szeg\H o kernel, transform pairs, applications of complex analysis
\end{center}

\begin{center}
{\bf Abstract}
\end{center}

\noindent A new transform-based technique that generalizes the unified transform method is developed for bounded doubly connected domains as a novel way to numerically solve boundary value problems for holomorphic functions and solutions to the Laplacian. This work builds on the transform methods for multiply connected circular domains developed by Crowdy (2015, IMA J., 80, 1902–1931) and the methods for simply connected bounded domains developed by H., Lanzani, Llewellyn Smith, and Luca (2025, Proc. A, 481 (2319)). The Szeg\"{o} kernel of the annulus and a corresponding transformation law is pivotal in the derivation of this new technique. The modified Schwarz problem for two domains is implemented to demonstrate the effectiveness of this new method.

\vspace{1cm}
\noindent
%\textit{Keywords: }transform pair; harmonic function; mixed boundary value problem.

\vfill\eject

\section{Preliminaries}
\subsection{Finitely connected domains}\label{Prelim}
Let $D$ be a finitely connected bounded domain in $\C$ with $C^\infty$ smooth boundary. Let $\partial D$ denote the boundary of $D$, which consists of (say) $n$ smooth simple closed curves. We choose $C^\infty$ complex valued functions $\zeta_j(t)$, $j=1,\dots,n$, $t\in [a_j,b_j]$, that parametrize the $n$ boundary curves of $\partial \Omega$ in the standard way. This means that $\zeta_j$ and all of its derivative agrees at $t=a_j$ and $t=b_j$, $\zeta'_j(t)$ is nowhere vanishing, $\zeta_j(t)$ traces out the curve exactly once, and 
\begin{equation*}-i\zeta'_j(t)
\end{equation*}
represents the direction of the outward pointing normal vector to the boundary at the point $\zeta_j(t)$. The unit tangent function $T_{\partial D}(\zeta)$, for $\zeta\in \partial D$, represents the complex number of modulus 1 that gives the direction of the tangent vector to $\partial D$ at $\zeta$ pointing in the direction of the standard orientation of the boundary. The unit tangent vector $T_{\partial D}$ is a continuously differential function that can be written explicitly as $T_{\partial D}(\zeta_j(t))=\zeta'_j(t)/|\zeta'_j(t)|$. The differential $d\zeta$ is given by $d\zeta=\zeta'(t)dt$ and the arclenth differential, denoted by $d\sigma$, is given by $d\sigma(\zeta)=|\zeta'(t)|dt$. It follows that
\begin{equation*}
    d\zeta=T_{\partial D}(\zeta)d\sigma(\zeta),
\end{equation*}
or equivalently
\begin{equation}\label{E:dz-ds}
    d\sigma(\zeta)=\overline{T_{\partial D}(\zeta)}d\zeta,
\end{equation}
for all points $\zeta\in\partial D$.

% Let $D\subset \C$ be a bounded domain in $\mathbb C$. If the boundary of $D$, denoted by $\partial D$, is of class $C^1$, the unit tangent vector,
% \begin{align}
%     T_D(\zeta)=\frac{\zeta'(t)}{|\zeta'(t)|},
% \end{align}
% is a continuously differentiable function of $\zeta \in \partial \D$. One has $d\zeta=\zeta'(t)dt$ and $d\sigma(\zeta)=|z'(t)|dt$ (\cite{Bell:1992}, p. 3), thus it follows that 
% \begin{equation}\label{E:dz-ds}
% d\sigma(\zeta)=\overline{T_{\mathbb D}(\zeta)}d\zeta.
% \end{equation}

% For less regular domains, such as Lipschitz domains, the unit tangent vector exists for $\sigma$-almost every $\zeta\in \partial D$. 
Let $C^\infty(\partial D)$ denote the $C^\infty$ smooth functions on $\partial D$. For $u$ and $v$ in 
 $C^\infty(\partial D)$, the $L^2$ inner product on $\partial 
 D$ of $u$ and $v$ is defined by 
 \begin{equation}\label{innerProd}\langle u,v\rangle_{L^2(\partial D, \dd \sigma)}=\int_{\partial D}u\overline v \hspace{1mm}d\sigma.\end{equation} The space 
 $L^2(\partial D)$ is defined to be the Hilbert space obtained by 
 completing the space $C^\infty(\partial D)$ with respect to this 
 inner product. One may see that $L^2(\partial D)$ is equal to the 
 set of complex valued functions $u$ on $\partial D$ such that 
 $u(\zeta_j(t))$ is a measurable function of $t$ for each $j$, 
 \begin{equation*} ||u||^2=\int_{\partial D}|u|^2d\sigma =\sum_{j=1}^n\int_{a_j}^{b_j}|u(\zeta_j(t))|^2|\zeta'_j(t)|dt
 \end{equation*}
 is finite, and that this 
 definition is independent of the choice of the parametrization of the 
 boundary \cite[p. 11]{Bell:1992}.

 % \textcolor{red}{Note that we have the following formula for the arclength differential $d\sigma$ in terms of the complex differential $d\zeta$ 
 % \begin{align}\label{arclegnthFormula}
 %     d\sigma(\zeta)=\overline{T_{\partial D}(\zeta)}d\zeta.
 % \end{align}
 % }
 % \textcolor{red}{
 % Note that $d\zeta$ and $T_{\partial D}$ will depend on the orientation of the parametrization $\zeta(t)$, but $d\sigma(\zeta)=|\zeta(t)|dt$ does not. For example, if $D=\{z\in \mathbb C:r<|z|<1\}$, then on $\partial \D=\{ \zeta(t)=e^{2\pi it}:0\leq t\leq 1\}$ we have}
 % \begin{align*}
 %     \textcolor{red}{d\sigma(\zeta)\Big|_{|\zeta|=1}=\overline{T_{\partial \mathbb D}(\zeta)}d\zeta=\overline{i\zeta '(t)}\zeta'(t)dt=\overline{ie^{2\pi it}}ie^{2\pi it}dt=-i^2e^{-2\pi it}e^{2\pi it}dt=dt=|\zeta'(t)|dt,}
 % \end{align*}
 % \textcolor{red}{ and on $\{ \zeta(t)=re^{2\pi it}:0\leq t\leq 1\}$, we have}
 % \begin{align*}
 %      \textcolor{red}{d\sigma(\zeta)\Big|_{|\zeta|=r}=\overline{T_{\partial \mathbb D}(\zeta)}d\zeta=\overline{\frac{\zeta' (t)}{|\zeta'(t)|}}\zeta'(t)dt=\frac{\overline{ire^{2\pi it}}}{r}ire^{2\pi it}dt=-i^2e^{-2\pi it}re^{2\pi it}dt=rdt=|\zeta'(t)|dt,}
 % \end{align*}

For an arbitrary finitely connected domain $D$ in $\C$, we define the \textbf{Hardy Space} $H^p(D)$ as the analytic functions $f$ on $D$ such that $|f|^p$ has a harmonic majorant in $D$. The function $f$ belongs to the \textbf{Smirnov Space} $E^p(D)$ if there exists a sequence of domains $\{ \Omega_n\}$ whose boundaries  $\{C_n\}$ consists of a finite number of rectifiable Jordan curves, such that any $\Omega_n$ eventually contains each compact subset of $D$, and the lengths of $C_n$ are bounded, and 
\begin{equation}
\limsup_{n} \int_{C_n}|f(z)|^p \, d\sigma(z)\leq M<\infty.
\label{Hp condition}
\end{equation}
If we further assume that $\partial D$ consists of rectifiable Jordan curves, then the condition on the boundedness of the lengths of $C_n$ is superfluous. For the remainder of the paper, we will assume that the domain is finitely connected and that our boundary curves are rectifiable. For  all $f\in E^1(D)$, the classical Cauchy integral formula,
\begin{align*}
    f(z)=\frac{1}{2\pi i}\int_{\partial D}\frac{f(\zeta)}{\zeta-z}d\zeta,
\end{align*}
and Cauchy's theorem,
\begin{align*}
    \int_{\partial D} f(z)dz=0,
\end{align*}
hold for the domain $D$. The space $E^2(D)$ has a reproducing kernel inherited from the Hilbert space structure. The \textbf{Szeg\H o kernel} is defined as the unique Hilbert space reproducing kernel for the space $E^2(D)$ of holomorphic functions. The existence of the Szeg\H o kernel follows from the Riesz representation theorem, and we note that the Szeg\H o kernel is domain dependent.

\begin{figure*}[t!]
    \centering
\includegraphics[width=0.4\textwidth]{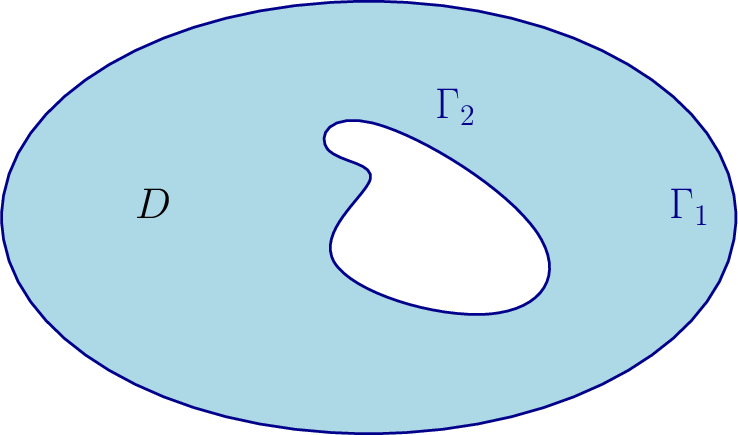}
        
        \label{geometric construction}

    \caption{A doubly connected domain $D$ with outer boundary curve $\Gamma_1$ and inner boundary curve $\Gamma_2$.}
\end{figure*}

\begin{theorem}{(The Szeg\H o formula)} Let $D$ be a bounded finitely connected domain whose boundary consists of $C^\infty$ simple smooth closed curves. For any $f$ in the Smirnov space $E^2(D)$, we have
\begin{equation}\label{E:SF}
f(z)=\int_{\partial D} {f(\zeta) \overline{S_{D}(\zeta, z)} \,\dd \sigma(\zeta)} = \langle f, S_{D}(\cdot, z) \rangle_{L^2(\partial D, \dd \sigma)}, \quad z \in D,
\end{equation}
where $S_{D}(\zeta, z)$ is the  Szeg\H o kernel for $D$ and $d\sigma$ is arclength. 
\end{theorem}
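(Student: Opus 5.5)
The plan is to derive the formula directly from the Hilbert-space definition of $S_D$ as the reproducing kernel of $E^2(D)$, given just before the statement. The only substantive point is to check that $E^2(D)$, with the boundary inner product \eqref{innerProd}, really is a Hilbert space on which point evaluation is bounded. Once that holds, the identity is the reproducing property written out.

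First, I identify $E^2(D)$ with a closed subspace of $L^2(\partial D)$. Since $\partial D$ consists of $C^\infty$ (hence rectifiable) Jordan curves, every $f\in E^2(D)$ has nontangential boundary values $\sigma$-a.e., again denoted $f$, and these lie in $L^2(\partial D)$. The map $f\mapsto f|_{\partial D}$ is injective, because $E^2\subset E^1$ and the Cauchy integral formula recovers $f$ from its boundary values. The inner product on $E^2(D)$ is then $\langle f,g\rangle=\int_{\partial D}f\overline g\,d\sigma$.

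Second, I show that point evaluation is bounded. Fix $z\in D$ and let $\delta=\operatorname{dist}(z,\partial D)>0$. By the Cauchy integral formula for $E^1(D)$ and the identity $d\zeta=T_{\partial D}\,d\sigma$,
\begin{equation*}
|f(z)|\le\frac{1}{2\pi\delta}\int_{\partial D}|f|\,d\sigma\le\frac{L(\partial D)^{1/2}}{2\pi\delta}\,\|f\|_{L^2(\partial D)},
\end{equation*}
where the second step is Cauchy--Schwarz and $L(\partial D)$ is the length of the boundary. The same estimate holds uniformly for $z$ in any compact subset of $D$.

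Third, I prove that $E^2(D)$ is complete. Let $(f_n)$ be Cauchy in $E^2(D)$. Its boundary values converge in $L^2(\partial D)$ to some $\phi$, and by the estimate above $f_n\to f$ locally uniformly on $D$, so $f$ is holomorphic. Two things remain: that $f\in E^2(D)$, and that its boundary values equal $\phi$. For the first, I would use the Cauchy integral representation $f(z)=\frac{1}{2\pi i}\int_{\partial D}\phi(\zeta)(\zeta-z)^{-1}d\zeta$. For the second, I would use the fact that on smooth curves the Cauchy integral of an $L^2$ function lies in $E^2$. Alternatively, with approximating curves $C_m$ (for instance level curves parallel to $\partial D$), I would bound $\int_{C_m}|f|^2\,d\sigma$ uniformly by Fatou-type arguments applied to the $f_n$.

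I expect this completeness step to be the main obstacle. It is where the smoothness of $\partial D$ is used, through the $L^2$-boundedness of the Cauchy integral on smooth curves, or equivalently through the characterization $E^2=H^2$ for such domains. I would cite standard Smirnov-class theory for it rather than reprove it.

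Finally, I apply the Riesz representation theorem. For fixed $z$, evaluation $f\mapsto f(z)$ is a bounded linear functional on the Hilbert space $E^2(D)$. So there is a unique $S_D(\cdot,z)\in E^2(D)$ with
\begin{equation*}
f(z)=\langle f,S_D(\cdot,z)\rangle_{L^2(\partial D,\dd\sigma)}=\int_{\partial D}f(\zeta)\overline{S_D(\zeta,z)}\,\dd\sigma(\zeta),
\end{equation*}
which is exactly \eqref{E:SF}. Uniqueness of the reproducing kernel shows that this $S_D$ is the Szeg\H o kernel as defined.
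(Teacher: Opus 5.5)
Your proposal is correct and follows the paper's own justification. The paper gives no separate proof: it defines $S_D$ as the reproducing kernel of the Hilbert space $E^2(D)$ with the boundary inner product $\langle u,v\rangle=\int_{\partial D}u\overline{v}\,d\sigma$, obtained from the Riesz representation theorem, so the formula is just the reproducing property. You additionally supply what the paper takes for granted, namely bounded point evaluation via the Cauchy formula and closedness of $E^2(D)$ in $L^2(\partial D)$.

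One bookkeeping point in your completeness step: the roles of your two tools are swapped. The representation $f(z)=\frac{1}{2\pi i}\int_{\partial D}\phi(\zeta)(\zeta-z)^{-1}\,d\zeta$, together with the fact that on smooth curves the Cauchy integral of an $L^2$ function lies in $E^2$, is what gives $f\in E^2(D)$. Identifying the boundary values of $f$ with $\phi$ instead needs the Plemelj formula and the $L^2(\partial D)$-boundedness of the boundary Cauchy operator $\mathcal{C}$. That boundedness lets the identity $f_n|_{\partial D}=\mathcal{C}(f_n|_{\partial D})$ pass to the limit, giving $\mathcal{C}\phi=\phi$.
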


\begin{example}
 \noindent The Szeg\H o kernel and Cauchy kernel for the unit disk, denoted by $C_{\D}$, is given by
\begin{equation}\label{E:Szego-disc}
 S_{\mathbb{D}}(\zeta, z)=C_{\mathbb{D}}(\zeta, z)=-\frac{\overline{T_\mathbb{D}(\zeta)}}{2\pi\ci(\overline{\zeta}-\overline{z})} = \frac{1}{2\pi}\frac{1}{(1-\zeta \overline{z})},\quad |\zeta|=1, \quad |z|<1,
 \end{equation}
 where we have used that $T_\mathbb{D}(\zeta)=\ci \zeta$. We note that the unit disk is the only domain on which the Cauchy kernel and Szeg\H o kernel are equal. 
 \end{example}

\begin{example}
The Szeg\H o kernel for the annulus $\A_r :=\{ z\in \C: r<|z|<1\}$ is
\begin{equation}\label{sezgoKernelAnnulus}
   \overline{ S_{\A_r}(z,\zeta)}=\sum_{n=-\infty}^{\infty} \frac{(z\overline\zeta)^n}{1+r^{2n+1}}.
\end{equation}
Any $f$ in the Smirnov space $E^2(\mathbb A_r)$ can represented by 
\begin{equation}\label{szegoFormula}
     f(z)=\langle f,S_{\A_r}(z,\cdot)\rangle_{E^2}=\int_{\partial \A_r}f(\zeta)\overline{S_{\A_r}(z,\zeta)}d\sigma(\zeta).
\end{equation}

\end{example}

\subsection{Background and connection to the unified transform method}
The unified transform method (UTM) is a technique to analyze and solve boundary value problems for integrable and linear/nonlinear PDEs. A.S. Fokas pioneered the UTM \cite{FO97}, and the method has been extended and generalized by many mathematicians since. The UTM has been studied and formulated for the Laplace, biharmonic, Helmholtz, and modified Helmholtz equations for convex polygons in the plane \cite{DB14,DF15,CH21,HLLL24,SF10}. For the Laplacian, Fokas and Kapaev developed the UTM in both bounded and unbounded polygonal regions \cite{FK03}. Crowdy extended the UTM for holomorphic functions on polygonal, circular, and multiply connected domains with circular boundary curves \cite{C15,C15B}. Crowdy's transform pair for multiply connected domains with circular boundary curves is the first work to extend the UTM to more complex geometries, while Crowdy and Luca later extended the UTM for the biharmonic equation in doubly connected domains with circular boundary curves \cite{CL18}. The present work extends the UTM for holomorphic functions to bounded doubly connected domains with smooth boundary curves by utilizing the transformation law for the Szeg\H o kernel. Using the spectral decomposition of the Cauchy kernel developed by Crowdy \cite{C15} and the transformation law for the Szeg\H o kernel, H., Lanzani, Luca, and S.L. Smith extended the UTM for holomorphic functions to bounded, simply connected, Lipschitz domains \cite{HLLS25}.

\begin{figure*}[t!]
    \centering
\includegraphics[width=0.3\textwidth]{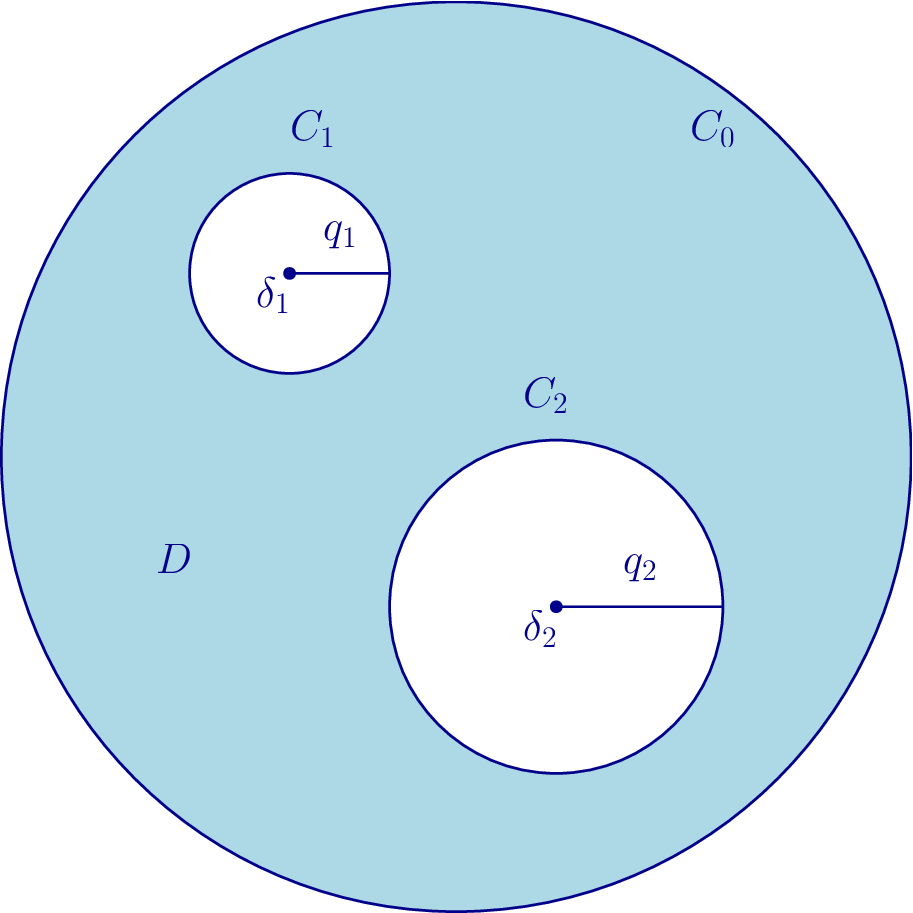}

    \caption{A multiply connected domain $D$ with circular boundary curves.}\label{CircularMultConn}
\end{figure*}

\par Crowdy uses the Cauchy integral formula to develop a transform pair that extends the UTM to complex valued holomorphic functions. He does so by finding the following spectral decomposition for the Cauchy Kernel for the unit disk \cite{C15}:

\begin{equation}\label{E:spectral-disc}
\frac{1}{\zeta-z}=\int_{L_{1}} {\frac{1}{1-\ee^{2\pi \ci k}}\, \frac{z^{k}}{\zeta^{k+1}} \,\dd k}+\int_{L_{2}} {\frac{z^{k}}{\zeta^{k+1}} \,\dd k}+\int_{L_{3}} {\frac{\ee^{2\pi \ci k}}{1-\ee^{2\pi \ci k}}\, \frac{z^{k}}{\zeta^{k+1}} \,\dd k},
\end{equation}
where $L_1,L_2$ and $L_3$ are contours in the spectral k-plane pictured in figure \ref{fundamentalContour}. To be precise, with $0<q<1/2$, $L_1$ is the quarter circle from $-iq$ to $-q$ and the portion of the negative $y$-axis below $-iq$, $L_3$ is the quarter circle from $-q$ to $iq$ and the portion of the positive $y-$axis above $iq$, and $L_2$ is the half line on the real axis starting at $-q$ and emanating in the positive direction. We remark that in \cite{C15,C15B}, $0<q<1$, but we choose $0<q<1/2$ so that the contour is at a positive distance away from the poles appearing in equation (\ref{polesonx=-1/2}). Crowdy formulates this spectral decomposition for the unit disk by writing the Cauchy kernel as a geometric series and showing
\begin{align*}
    \frac{1}{1-z}=\sum_{n=0}^\infty z^n=\int_{L_1}\frac{1}{1-e^{2\pi ik}}z^kdk+\int_{L_2}z^kdk+\int_{L_3}\frac{e^{2\pi ik}}{1-e^{2\pi ik}}z^kdk,
\end{align*} by using residue calculus to compute. Let the domain $D$ be the unit disk with $M$ disks of center $\delta_j$ and radius $q_m$ removed as shown in figure \ref{CircularMultConn}. Crowdy's transform pair for $D$ is then
\begin{align*}
    &f(z)=\frac{1}{2\pi i}\Big[ \int_{L_1}\frac{\rho_{00}(k)}{1-e^{2\pi ik}}z^kdk+\int_{L_2}\rho_{00}z^kdk+\int_{L_3}\frac{\rho_{00}(k)e^{2\pi ik}}{1-e^{2\pi ik}}z^kdk
    \\&- \sum_{j=1}^m \Big( \int_{L_1}\frac{\rho_{jj}(k)}{1-e^{2\pi ik}}\Big(\frac{q_j}{z-\delta_j}\Big)^{k+1}+\int_{L_2}\rho_{jj}(k)\Big(\frac{q_j}{z-\delta_j}\Big)^{k+1}+\int_{L_3}\frac{\rho_{jj}(k)e^{2\pi ik}}{1-e^{2\pi ik}}\Big(\frac{q_j}{z-\delta_j}\Big)^{k+1}\Big)dk\Big]
\end{align*}
where the spectral matrix is defined for $n=0,1,2,\dots,M$
\begin{align*}
    &\rho_{0n}(k)=\int_{C_n}\frac{f(\zeta)}{\zeta^{k+1}}d\zeta
    \\& \rho_{mn}(k)=-\frac{1}{q_m}\int_{C_n}f(\zeta)\Big[\frac{\zeta-\delta_m}{q_m}\Big]^kd\zeta, \hspace{1cm} m=1,2,\dots,M.
\end{align*}
The global relations for this transform pair are
\begin{align*}
    \sum_{n=0}^M\rho_{mn}(k)=0, \hspace{1cm}k\in -\mathbb N, m=0,1,\dots,M.
\end{align*}
We again remark that the domain $D$ appearing in Crowdy's transform pair above has circular boundary arcs, hence the spectral decomposition for the unit disk and the Cauchy integral formula was used in conjunction to extend the UTM in this setting.
\begin{figure}
    \centering
        \includegraphics[width=0.5\textwidth]{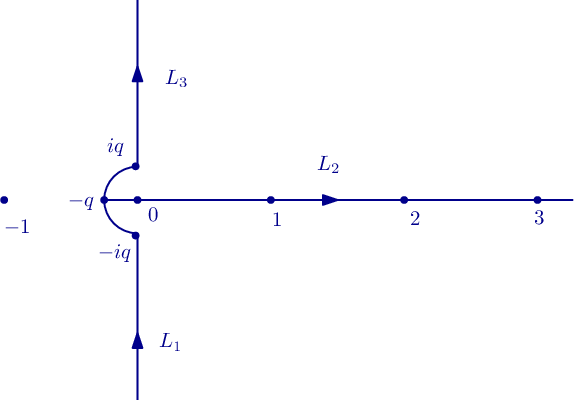} 
        
    \caption{The fundamental contour with $0<q<1/2$.}\label{fundamentalContour}
\end{figure}

\section{Deriving the new spectral decomposition for $S_{\mathbb A_r}$}
The first step in deriving the new transform pair for general domains is finding the spectral decomposition of the Szego kernel of $\A_r$. Recall the Szeg\H o kernel for the annulus is
\begin{align}
    &\overline{S_{\mathbb A_r}(z,\zeta)}=\sum_{n=-\infty}^\infty\frac{(z\overline \zeta)^{n}}{1+r^{2n+1}}=\sum_{n=-\infty}^{-1}\frac{(z\overline \zeta)^{n}}{1+r^{2n+1}}+\sum_{n=0}^\infty\frac{(z\overline \zeta)^{n}}{1+r^{2n+1}}.
    \end{align}
    Rearranging the first series (with $n<0$) above gives
    \begin{equation*}
    \begin{split}
    &\sum_{n=-\infty}^{-1}\frac{(z\overline \zeta)^{n}}{1+r^{2n+1}}=\sum_{n=1}^{\infty}\frac{1}{(z\overline \zeta)^{n}}\frac{1}{1+r^{-2n+1}}\\&=\sum_{n=1}^{\infty}\Big(\frac{r^2}{z\overline \zeta}\Big)^n\frac{1}{r+r^{2n}}
    \\&=\frac{r}{z\overline \zeta}\sum_{n=0}^{\infty}\Big(\frac{r^2}{z\overline \zeta}\Big)^n\frac{1}{1+r^{2n+1}}.
    \end{split}
\end{equation*}
% \textbf{Remark}: Note that for $|z|<1,|\zeta|=1$, we have
% \begin{align*}  &\lim_{r\rightarrow 0}\frac{1}{2\pi}\Big(\frac{r}{z\overline \zeta}\sum_{n=0}^{\infty}\Big(\frac{r^2}{z\overline \zeta}\Big)^n\frac{1}{1+r^{2n+1}}+\sum_{n=0}^\infty\frac{(z\overline \zeta)^n}{1+r^{2n+1}}\Big)\\&=0+\frac{1}{2\pi}\sum_{n=0}^\infty(z\overline{\zeta})^n=\frac{1}{2\pi}\frac{1}{1-z\overline \zeta}= S_{\mathbb D}(z,\zeta)= C_{\mathbb D}(z,\zeta) \hspace{8mm} 
% \end{align*}
% where $C_{\mathbb D}(z,\zeta)$ is the Cauchy kernel for the disk.

\noindent Recall that Crowdy finds 
\begin{align*}
    \frac{1}{1-z}=\sum_{n=0}^\infty z^n=\int_{L_1}\frac{1}{1-e^{2\pi ik}}z^kdk+\int_{L_2}z^kdk+\int_{L_3}\frac{e^{2\pi ik}}{1-e^{2\pi ik}}z^kdk.
\end{align*}
It is easily verified that the integrands in (\ref{specdecom1}) decay exponentially as $|k|\rightarrow\infty$ along the corresponding contours for all $|z|<1$. We repeat his argument to find that 

\begin{equation}\label{specdecom1}
\begin{split}
\sum_{n=0}^\infty&\frac{(z\overline{\zeta})^n}{1+r^{2n+1}}\\&=\int_{L_1}\frac{1}{1+r^{2k+1}}\frac{(z\overline{\zeta})^k}{1-e^{2\pi ik}}dk+\int_{L_2}\frac{(z\overline{\zeta})^k}{1+r^{2k+1}}dk+\int_{L_3}\frac{(z\overline{\zeta})^k}{1+r^{2k+1}}\frac{e^{2\pi ik}}{1-e^{2\pi ik}}dk
\end{split}
\end{equation}
and
\begin{equation}\label{specdecoms2}
\begin{split}
\frac{r}{z\overline\zeta}\sum_{n=0}^{\infty}\Big(&\frac{r^2}{z\overline \zeta}\Big)^n\frac{1}{1+r^{2n+1}}=\frac{r}{z\overline\zeta}\Big[\int_{L_1}\frac{1}{1+r^{2k+1}}\frac{1}{1-e^{2\pi ik}}\Big(\frac{r^2}{z\overline \zeta}\Big)^kdk\\&+\int_{L_2}\frac{1}{1+r^{2k+1}}\Big(\frac{r^2}{z\overline \zeta}\Big)^kdk+\int_{L_3}\frac{1}{1+r^{2k+1}}\frac{e^{2\pi ik}}{1-e^{2\pi ik}}\Big(\frac{r^2}{z\overline \zeta}\Big)^kdk\Big].
\end{split}
\end{equation}
See Appendix  \ref{appendixA} for details. Define the function 
\begin{align*}
    \gamma(w,k):=\frac{1}{1+r^{2k+1}}\Big(w^k+\frac{r^{2k+1}}{w^{k+1}}\Big), \hspace{6mm} w\in \overline{\D}\setminus\{0\},\hspace{1mm} k\in \mathbb C.
\end{align*}
Then the Szeg\H o kernel for $\A_r$ has the following spectral decomposition
\begin{equation}\label{spectralSzegoAnnulus}
\begin{split}
    &2\pi S_{\mathbb A_r}(z,\zeta)=\int_{L_1}\frac{\gamma(z\overline{\zeta},k)}{1-e^{2\pi ik}}dk+\int_{L_2}\gamma(z\overline{\zeta},k)dk
    +\int_{L_3}\frac{e^{2\pi ik}}{1-e^{2\pi ik}}\gamma(z\overline{\zeta},k)dk
    \\&=\int_{L_1}\frac{1}{1+r^{2k+1}}\frac{1}{1-e^{2\pi ik}}\big(z\overline{\zeta}\big)^{k+1}dk+\int_{L_2}\frac{1}{1+r^{2k+1}}\big(z\overline{\zeta}\big)^{k+1}dk\\&+\int_{L_3}\frac{1}{1+r^{2k+1}}\frac{e^{2i\pi ik}}{1-e^{2\pi ik}}\big(z\overline{\zeta}\big)^{k+1}dk
    \\&+\int_{L_1}\frac{1}{1+r^{2k+1}}\frac{1}{{1-e^{2\pi ik}}}\Big(\frac{r^2}{z\overline{\zeta}}\Big)^{k+1}dk+\int_{L_2}\frac{1}{1+r^{2k+1}}\Big(\frac{r^2}{z\overline{\zeta}}\Big)^{k+1}dk
    \\&+\int_{L_3}\frac{1}{1+r^{2k+1}}\frac{e^{2i\pi ik}}{{1-e^{2\pi ik}}}\Big(\frac{r^2}{z\overline{\zeta}}\Big)^{k+1}dk.
    \end{split}
\end{equation}
The integrands in (\ref{spectralSzegoAnnulus}) also decay exponentially as $|k|\rightarrow \infty$ along the contours for all $z\in \A_r$ and $\zeta\in \partial\A_r $. To see this, first note that $1+r^{2k+1}=0$ for $k\in\mathbb \C$ such that 
\begin{align}\label{polesonx=-1/2}
    k=-\frac{1}{2}+i\frac{(2n+1)\pi}{2\ln(r)}, \hspace{1cm} n\in\mathbb Z.
\end{align}
Thus the poles of $1/(1+r^{2k+1})$ are at least a distance of $1/2$ from the contours $L_1$ and $L_3$ as $|k|\rightarrow \infty$. Further  for $k=ix$ with $x\in \R$ we have
\begin{align*}
    \Big|\frac{1}{1+r^{2k+1}}\Big|=\Big|\frac{1}{1+r^{2ix}r}\Big|\leq\frac{1}{1-r}.
\end{align*}
Thus $1/(1+r^{2k+1})$ is bounded along $L_1$ and $L_3$. It follows that the integrals over $L_1$ and $L_3$ exponentially decay for the same reason they decay in $(\ref{specdecom1})$. Next, let $z=x+iy$ with $x>0$. Then
\begin{align*}
     \Big|\frac{1}{1+r^{2k+1}}\Big|= \Big|\frac{1}{1+r^{2x}r^{2yi}r}\Big|. 
\end{align*}
Given that $x>0$, it follows that quantity above 
 is bounded. Thus the integrands in (\ref{spectralSzegoAnnulus}) decay exponentially as $|k|\rightarrow \infty$ along the corresponding contours.

\subsection{New transform pair for $\mathbb A_r$}
The new spectral decomposition found in (\ref{spectralSzegoAnnulus}) will be used to derive a new transform pair for $\A_r$, but first a definition.
\begin{definition}\label{spectralFcns}
Let $D^1=\D$ and let $D^2=D_r(a)$ denote the disk of radius $r$ centered at $a$. Let the boundary of $ \D$ be parametrized by $\zeta_1(t)=e^{ it}$, $t\in[0,2\pi]$ and the boundary of $D_r(0)$ be parametrized by $\zeta_2(t)=-re^{it}$, $t\in[0,2\pi]$. For $f\in E^2(\mathbb A_r)$ and $z\in \mathbb A_r$, define 
\begin{align*}
        &\rho_{1m}(k)=\int_{\partial  D^m}\frac{f(\zeta_m)}{\zeta_m^{k+1}}d\zeta_m \hspace{1cm} k\in \C,m\in\{1,2\},
        \\& \rho_{2m}(k)=r^{2k+1}\int_{\partial D^m}f(\zeta_m)\zeta_m^{k}d\zeta_m \hspace{1cm} k\in \C,m\in\{1,2\}.
    \end{align*}
    Additionally, define
    \begin{equation}\label{tau}
        \tau(z,k)=\frac{1}{1+r^{2k+1}}\Big[z^k(\rho_{11}(k)+\rho_{22}(-k-1))+\frac{1}{z^{k+1}}(\rho_{21}(k)+\rho_{12}(-k-1))\Big].
    \end{equation}
\end{definition}
\noindent\textbf{Remark 1}: The integrals in definition \ref{spectralFcns} are independent of choice of parametrization, but the parametrization must be oriented in the standard way as described in section \ref{Prelim}. The subscripts in \ref{TransPairAnn} and the subscript appearing in $\zeta_m$ will be dropped for the remainder of this paper, but it is assumed that any parametrization chosen in this standard way.\\
\noindent\textbf{Remark 2}: 
Crowdy's spectral functions for $\mathbb A_r$ are
\begin{align*}
    &\rho_{0n}=\oint_{C_n}\frac{f(\zeta)}{\zeta^{k+1}}d\zeta
    \\&\rho_{1n}(k)=-\frac{1}{r}\oint_{C_n}f(\zeta)\Big[\frac{\zeta}{r}\Big]^kd\zeta=-\frac{1}{r^{k+1}}\oint_{C_n}f(\zeta)\zeta^kd\zeta,
\end{align*}
for $n\in \{0,1\}$ and $m\in \{1,2\}$. The integrals in Crowdy's spectral $\rho_{2m}$ functions are orientated in the counterclockwise direction, hence the negative sign.

\begin{theorem}\label{TransPairAnn}
    Let $f\in E^2(\mathbb A_r)$ and $z\in\mathbb A_r$. Then $f$ has the following  transform pair representation
    \begin{equation}\label{transformAnnulus}
        f(z)=\frac{1}{2\pi i}\Big[\int_{L_1}\frac{1}{1-e^{2\pi i k}}\tau(z,k)dk+\int_{L_2}\tau(z,k)dk+\int_{L_3}\frac{e^{2\pi  ik}}{1-e^{2\pi  ik}}\tau(z,k)dk\Big],
    \end{equation}
    where $\tau$ is defined by \eqref{tau}. Further, the following global relation holds
    \begin{align}\label{GRA_r}
        \rho_{j1}(k)+\rho_{j2}(k)=0, \hspace{5mm} k\in \mathbb Z,\ j=1,2.
    \end{align}

    \begin{proof}
       By (\ref{szegoFormula}) and (\ref{spectralSzegoAnnulus}), for $f\in E^2(\mathbb A_r)$ and $z\in \mathbb A_r$, we have
\begin{align*}
     &f(z)=\int_{\partial \A_r}f(\zeta) \overline{S_{\mathbb A_r}(z,\zeta)}d\sigma(\zeta)\\&=\frac{1}{2\pi}\int_{\partial\mathbb A_r}f(z)\Big[\int_{L_1}\frac{\tau(z\overline{\zeta},k)}{1-e^{2\pi ik}}dk+\int_{L_2}\tau(z\overline{\zeta},k)dk
    +\int_{L_3}\frac{e^{2\pi ik}}{1-e^{2\pi ik}}\tau(z\overline{\zeta},k)\Big]dkd\sigma(\zeta).
\end{align*}
The modulus of the integrands in the spectral decomposition decays exponentially $|k|\rightarrow\infty$, so we may switch the order of integration. See Appendix B of \cite{HLLS25} for details concerning this type of argument. Using the equation (\ref{E:dz-ds}) and simplifying gives (\ref{transformAnnulus}). See Appendix \ref{appendixB} for these details. Next, for $j=1$, we have
\begin{align*}
    \rho_{11}(k)+\rho_{12}(k)=\int_{\partial \D}\frac{f(\zeta_1)}{\zeta_1^{k+1}}d\zeta_1+\int_{\partial D_r(0)}\frac{f(\zeta_2)}{\zeta_2^{k+1}}d\zeta_2=\int_{\mathbb A_r}\frac{f(\zeta)}{\zeta^{k+1}}d\zeta,
\end{align*}
which is equal to 0 when $k\in\mathbb Z$ by Cauchy's theorem for $E^1(\mathbb A_r)\supseteq E^2(\mathbb A_r)$. Likewise 
\begin{align*}
    \rho_{21}(k)+\rho_{22}(k)=r^{2k+1}\int_{\partial \D}f(\zeta_1)\zeta_1^{k}d\zeta_1+r^{2k+1}\int_{\partial D_r(0)}f(\zeta_2)\zeta_2^{k}d\zeta_2=r^{2k+1}\int_{\mathbb A_r}f(\zeta)\zeta^{k}d\zeta=0.
\end{align*}
Here we have used that fact that $\zeta^k$ is holomorphic on $\A_r$ for all integers $k$. 
    \end{proof}
\end{theorem}
We remark on a key difference between transform pair developed using the Szeg\H o kernel for the annulus and the transform pair for the Szeg\H o kernel for the unit disk. For the unit disk, the Szeg\H o kernel and Cauchy kernel are the same, hence the transform pair found in \cite{HLLS25} matches the transform pair found in \cite{C15} when the domain is the unit disk. However, the Szeg\H o kernel for an annulus is not equal to the Cauchy kernel, so the transform pair developed in this work will never equal the transform pair found in \cite{C15B}.

\subsection{New transform pair for doubly connected domains}\label{NewTransPairSec}

One advantage of the Cauchy kernel is that is it is not domain dependent. However, the spectral decomposition (\ref{E:spectral-disc}) only holds on domains with circular boundary curves. While the Szeg\H o kernel is domain dependent, it has a transformation law that allows for the generalization of (\ref{spectralSzegoAnnulus}) to general domains. To be precise, if $\Phi: D_1\mapsto D_2$ is conformal, then
\begin{equation}\label{E:Szegokernel}
S_{D_1}(\zeta,z)\coloneqq \overline{\sqrt{\Phi'(z)}} S_{D_2}(\Phi(\zeta),\Phi(z)) \sqrt{\Phi'(\zeta)}, \quad \zeta \in \partial D_1, \quad z \in D_1.
\end{equation}
If $D_1$ and $D_2$ are smooth bounded domains (possibly finitely connected), then the transformation law above holds. Using the equation in \eqref{E:dz-ds}, we can write
\begin{equation}\label{E:Szego-cont}
\overline{S_{D_1}(\zeta,z)}\,\dd \sigma(\zeta) =\sqrt{\Phi'(z)} ~\overline{S_{D_2}(\Phi(\zeta),\Phi(z))}~\overline{{\sqrt{\Phi'(\zeta)}}}~\overline{T_{D_1}(\zeta)}\, \dd \zeta.
\end{equation}
 One may prove (see \cite[p. 53]{Bell:1992}) that
\begin{equation}\label{E:tg-vecs}
T_{ D_2}(\Phi (\zeta))\overline{\sqrt{\Phi'(\zeta)}}  =
 \sqrt{\Phi'(\zeta)}T_{D_1}(\zeta),\quad \zeta \in \partial D.
\end{equation}
Combining the two equations \eqref{E:tg-vecs} and \eqref{E:Szego-cont} gives
\begin{equation}\label{E:Szego-final}
\overline{S_{D_1}(\zeta,z)}\,\dd \sigma(\zeta) =\sqrt{\Phi'(z)}~\overline{S_{D_2}(\Phi(\zeta),\Phi(z))}~{\sqrt{\Phi'(\zeta)}}~\overline{T_{D_2}(\Phi(\zeta))} \dd \zeta.
\end{equation}
Now let $D_2=\mathbb A_{r}$ and $D_1=D$ where $D$ is a smooth bounded domain such that $\Psi:\mathbb A_r\rightarrow D$ is a conformal map. Substituting in the expression for $S_{\mathbb{A}_r}$ found in \eqref{spectralSzegoAnnulus} into \eqref{E:Szego-final} gives
\begin{equation}
\begin{split}
    &S_{D}(z,\zeta)d\sigma(z)=\sqrt{\Phi'(\zeta)}S_{\mathbb A_r}(\Phi(z),\Phi(\zeta))\sqrt{\Phi'(z)}\hspace{1mm}\overline{iT_{\mathbb A_r}(\zeta)}d\zeta\\&=\frac{\sqrt{\Phi'(z)}}{2\pi i}\Big[\int_{L_1}\frac{\tau(\Phi(z)\Phi(\overline{\zeta}),k)}{1-e^{2\pi ik}}dk+\int_{L_2}\tau(\Phi(z)\Phi(\overline{\zeta}),k)dk
    \\&+\int_{L_3}\frac{e^{2\pi ik}}{1-e^{2\pi ik}}\tau(\Phi(z)\Phi(\overline{\zeta}),k)dk\Big]\sqrt{\Phi'(\zeta)}\overline{T_{\mathbb{A}_r}(\Phi(\zeta))}d\zeta.
    \end{split}
\end{equation}
This implies the following theorem.

\begin{theorem} Let $D$ be a bounded smooth doubly connected domain conformally equivalent to $\mathbb A_r=\{ z\in \C: r<|z|<1\}$, say $\Phi:D\rightarrow \mathbb A_r$ is a conformal map. Let $\Gamma_1$ be the outer boundary curve of $D$, i.e. $\Phi^{-1}(\partial \D)=\Gamma_1$ and $\Gamma_2$ be the inner boundary of $D$, i.e. $\Phi^{-1}(\partial D_r(0))=\Gamma_2$. Then for any $f\in E^2(D)$ and $z\in D$, we have
   \begin{equation}\label{transformDoublyConnected}
        f(z)=\frac{\sqrt{\Phi'(z)}}{2\pi i}\Big[\int_{L_1}\frac{1}{1-e^{2\pi i k}}\tau_\Phi(z,k)dk+\int_{L_2}\tau_\Phi(z,k)dk+\int_{L_3}\frac{e^{2\pi  ik}}{1-e^{2\pi  ik}}\tau_\Phi(z,k)dk\Big],
    \end{equation}
    where
         \begin{equation}\label{tauPhi}
        \tau_\Phi(z,k)=\frac{\Phi(z)^k}{1+r^{2k+1}}(\rho_{11,\Phi}(k)+\rho_{22,\Phi}(-k-1))+\frac{1}{\Phi(z)^{k+1}}(\rho_{21,\Phi}(k)+\rho_{12,\Phi}(-k-1)),
    \end{equation}
    \begin{equation}\label{rhoFcns}
    \begin{split}
        &\rho_{1m,\Phi}(k)=\int_{\Gamma_m}\frac{f(\zeta)\sqrt{\Phi'(\zeta)
        }}{\Phi(\zeta)^{k+1}}d\zeta \hspace{1cm} k\in \C, m=1,2
        \\& \rho_{2m,\Phi}(k)=r^{2k+1}\int_{\Gamma_m}f(\zeta)\Phi(\zeta)^{k}\sqrt{\Phi'(\zeta)
        }d\zeta \hspace{1cm} k\in \C, m=1,2.
        \end{split}
    \end{equation}
    The following global relation also holds
    \begin{align}\label{GR}
      \rho_{j1,\Phi}(k)+\rho_{j2,\Phi}(k)=0, \hspace{5mm} k\in \mathbb N\cup \{0\}, j=1,2.
    \end{align}
    \end{theorem}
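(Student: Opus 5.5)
The plan is to reduce everything to Theorem \ref{TransPairAnn} by pulling back along $\Phi$, rather than redoing the spectral analysis. Given $f\in E^2(D)$, define on $\A_r$
\[
F(w)=f(\Phi^{-1}(w))\,\big(\sqrt{(\Phi^{-1})'(w)}\big).
\]
The first step is to check that $F\in E^2(\A_r)$. Since $D$ is smooth and bounded, $\Phi'$ and $(\Phi^{-1})'$ extend smoothly and are nonvanishing up to the boundary. In addition, $|(\Phi^{-1})'(w)|\,|dw| = d\sigma$ on the image curves. These facts together give that $\int|F|^2d\sigma$ on approximating curves is controlled by the corresponding integrals of $|f|^2$, so the Smirnov condition (\ref{Hp condition}) transfers.

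There is one point to address about the square root. A holomorphic branch of $\sqrt{\Phi'}$ on the doubly connected domain $D$ exists only if $\Phi'$ has zero winding around $\Gamma_2$. This holds because $\Phi'(\zeta)T_D(\zeta)=T_{\A_r}(\Phi(\zeta))|\Phi'(\zeta)|$, and the tangent vectors of $\Gamma_2$ and of $|w|=r$ both have winding $-1$ under the standard (clockwise) orientation. Hence $\arg\Phi'$ has zero net change and a branch can be fixed. This is the same branch implicitly used in (\ref{E:Szegokernel}).

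Next, I would apply Theorem \ref{TransPairAnn} to $F$ at the point $w=\Phi(z)$. This expresses $F(\Phi(z))=f(z)/\sqrt{\Phi'(z)}$ as the three contour integrals of $\tau(w,k)$. The spectral functions $\rho_{jm}$ of $F$ are then rewritten by the substitution $\omega=\Phi(\zeta)$, $\zeta\in\Gamma_m$. Because $\Phi$ maps $\Gamma_1\to\partial\D$ and $\Gamma_2\to\partial D_r(0)$ preserving the standard orientation, we have $d\omega=\Phi'(\zeta)\,d\zeta$ and $\sqrt{(\Phi^{-1})'(\omega)}=1/\sqrt{\Phi'(\zeta)}$. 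This yields, for instance,
\[
\int_{\partial D^m}\frac{F(\omega)}{\omega^{k+1}}d\omega=\int_{\Gamma_m}\frac{f(\zeta)\sqrt{\Phi'(\zeta)}}{\Phi(\zeta)^{k+1}}d\zeta=\rho_{1m,\Phi}(k),
\]
and similarly $\rho_{2m}=\rho_{2m,\Phi}$. Substituting into (\ref{tau}) with $w=\Phi(z)$ gives $\tau(\Phi(z),k)=\tau_\Phi(z,k)$, and multiplying through by $\sqrt{\Phi'(z)}$ yields (\ref{transformDoublyConnected}). Note that the factor $1/(1+r^{2k+1})$ in (\ref{tau}) multiplies both terms; I would read (\ref{tauPhi}) accordingly.

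Alternatively, one can follow the displayed computation preceding the theorem, inserting (\ref{E:Szego-final}) and (\ref{spectralSzegoAnnulus}) into the Szeg\H o formula (\ref{E:SF}) on $D$ and interchanging the $k$ and $\zeta$ integrals. The justification is that the integrands decay exponentially uniformly for $\zeta$ on the compact boundary and $z$ fixed, since $|\Phi(z)|\in(r,1)$. The pullback route above avoids redoing this interchange, since it was already handled in Theorem \ref{TransPairAnn}.

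For the global relation (\ref{GR}), note that $\rho_{j1,\Phi}(k)+\rho_{j2,\Phi}(k)$ equals $\int_{\partial D}g(\zeta)\,d\zeta$ with $g=f\sqrt{\Phi'}\,\Phi^{-k-1}$ or $g=r^{2k+1}f\sqrt{\Phi'}\,\Phi^{k}$. For integer $k$, $\Phi^{\pm}$ powers are holomorphic and bounded on $D$, since $\Phi$ does not vanish there. The factor $\sqrt{\Phi'}$ is bounded, so $g\in E^2(D)\subset E^1(D)$, and Cauchy's theorem gives zero; in particular this covers $k\in\mathbb N\cup\{0\}$.

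The main obstacle I expect is the careful verification that $F\in E^2(\A_r)$, equivalently that multiplication by the bounded, boundary-smooth factor $\sqrt{\Phi'}$ and composition with $\Phi$ preserve Smirnov classes, together with the consistent choice of the branch of $\sqrt{\Phi'}$. I would handle the class question by taking images under $\Phi^{-1}$ of the circles $|w|=1-\epsilon$ and $|w|=r+\epsilon$ as the approximating curves $C_n$, and using uniform bounds on $|\Phi'|$ from the smooth extension (Kellogg–Warschawski type regularity) to compare the integrals.
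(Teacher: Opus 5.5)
Your argument is correct, but it is organized differently from the paper's. The paper stays on $D$. It substitutes the kernel transformation law \eqref{E:Szego-final} and the spectral decomposition \eqref{spectralSzegoAnnulus} into the Szeg\H o formula \eqref{E:SF} for $D$, so the interchange of the $k$- and $\zeta$-integrals is implicitly redone there. You instead transport $f$ to $F=(f\circ\Phi^{-1})\sqrt{(\Phi^{-1})'}\in E^2(\mathbb A_r)$ and quote Theorem~\ref{TransPairAnn}. All the analysis (decay on $L_1,L_2,L_3$, Fubini) is then done once, on the annulus, and only a change of variables in the $\rho$'s remains.

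The two routes rest on the same fact, since \eqref{E:Szegokernel} is itself a consequence of the unitarity of $f\mapsto (f\circ\Phi^{-1})\sqrt{(\Phi^{-1})'}$ from $E^2(D)$ onto $E^2(\mathbb A_r)$. The paper's version yields a spectral representation of $S_D$ itself, independent of $f$. Yours makes explicit the branch of $\sqrt{\Phi'}$, the orientation of $\Gamma_2$, and the identity $\tau(\Phi(z),k)=\tau_\Phi(z,k)$. That is how you catch the misplaced factor $1/(1+r^{2k+1})$ in \eqref{tauPhi}.

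For the global relation, you observe that $f\sqrt{\Phi'}\,\Phi^{m}$, $m\in\mathbb Z$, is $f$ times a bounded holomorphic function, hence lies in $E^2(D)\subset E^1(D)$. This is shorter than the paper's detour through \eqref{EpHp} and $E^2=H^2$. In fact, after your pullback, \eqref{GR} is literally \eqref{GRA_r} applied to $F$.

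Two small remarks. First, to show $F\in E^2(\mathbb A_r)$ you need neither circles nor Kellogg--Warschawski estimates. Push forward whatever exhaustion $\{C_n\}$ witnesses $f\in E^2(D)$; then $\int_{\Phi(C_n)}|F|^2\,|dw|=\int_{C_n}|f|^2\,d\sigma$ exactly. Using $\Phi^{-1}$ of the circles $|w|=1-\epsilon$, $|w|=r+\epsilon$ instead presupposes the true but additional fact that the Smirnov condition does not depend on the exhaustion.

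Second, since your reduction transfers Theorem~\ref{TransPairAnn} verbatim, it inherits that theorem's $\tau$, whose inner-circle terms contain further slips:
\begin{itemize}
\item In Appendix B, $r^{2k+1}\int_{\partial D_r(0)}f\zeta^{-k-1}\,d\zeta$ is identified with $\rho_{22}(-k-1)$. By Definition~\ref{spectralFcns} it actually equals $r^{2k+1}\rho_{12}(k)=r^{4k+2}\rho_{22}(-k-1)$.
\item The identity $d\sigma=\frac{r}{i\zeta}\,d\zeta$ used there holds for the counterclockwise orientation of $|\zeta|=r$, while \eqref{GRA_r} requires the clockwise one.
\end{itemize}
The paper's proof passes through the same computation, so this is not a flaw in your reduction. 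It does mean the correct $\tau_\Phi$ is whatever the corrected annulus formula becomes under your substitution.
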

\noindent\textbf{Remark 1}: If one considers the transform pair when the inner radius $r$ is zero, then $\rho_{2m,\Phi}(k)=0$ for all $k\in \C$, $\tau_{\Phi}(z,k)=\Phi(k)^k\rho_{1m,\Phi}(z)$, and the transform pair reduces to
\begin{align*}
    f(z)=\frac{\sqrt{\Phi'(z)}}{2\pi i}\Big[\int_{L_1}\frac{\Phi(k)^k\rho_{1,\Phi}(z)}{1-e^{2\pi i k}}dk+\int_{L_2}\Phi(k)^k\rho_{1,\Phi}(z)dk+\int_{L_3}\frac{e^{2\pi  ik}}{1-e^{2\pi  ik}}\Phi(k)^k\rho_{1,\Phi}(z)dk\Big].
\end{align*}
This is exactly the transform pair for simply connected domains found in \cite{HLLS25}.\\
\noindent\textbf{Remark 2:} Note that if $k\in \mathbb Z$ instead of $\mathbb N$, then only a single global relation is needed (either $j=1$ or $j=2$). However, to emphasize the difference between the simply-connected case in which there is only one global relation and the doubly-connected case, the global relation above will be written as in (\ref{GR}).

\section{The global relation}
We will show that
\begin{align*}
   \rho_{j1,\Phi}(k)+\rho_{j2,\Phi}(k)=0, \hspace{5mm} k\in \mathbb Z, j=1,2.
\end{align*}
We prove the equality above for $j=1$. The proof for $j=2$ follows in a similar manner. This proof is similar to the proof of the corresponding global relation for simply connected domain presented in \cite{HLLS25}. Let $k \in \mathbb{N}\cup\{0\}$ and write $n \coloneqq -k-1$. Write
\begin{equation}
\tilde\rho(n)\coloneqq \rho_{11,\Phi}(-n-1) +\rho_{12,\Phi}(-n-1) = \int_{\partial D} {\sqrt{\Phi'(\zeta)} f(\zeta) \Phi(\zeta)^n \,\dd \zeta}.
\end{equation}
We will prove
\begin{equation}
\tilde\rho(n)=0, \qquad \text{for } n \in \{0\}\cup \mathbb{N},
\label{GR tau(n)}
\end{equation}
and \eqref{GR} then follows.
Denote the inverse of the conformal map $\Phi$ by $\Psi:\mathbb A_r\rightarrow D$. Now, we have the following classical result that can be found in \cite[p. 51]{Bell:1992}. Suppose that $\Psi:D_1\rightarrow D_2$ is a biholormphic mapping between bounded domains with smooth boundaries. Then $\Psi\in C^\infty(\overline{D_1})$, $\Psi'$ is nonvanishing on $\overline{D_1}$, and $\Psi^{-1}\in C^\infty(\overline{D_2})$. Additionally $\Psi'$ is equal to the square of a function that is holomorphic on a neighborhood of $\overline{D_1}$.
Further, it is known \cite[p. 183]{PD70} that
\begin{align}\label{EpHp}
    f\in E^2(\mathbb A_r) \text{ if and only if } f(\Psi(\zeta))[\Psi'(\zeta)]^{1/2}\in H^2( D).
\end{align}
 In particular, $\sqrt{\Psi'(\zeta)}\in H^2(D)$ given that the constant functions are in all Smirnov Spaces. Thus writing $\Psi'=\sqrt{\Psi'}\sqrt{\Psi'}$, applying the change of variable formula $\zeta:=\Psi(w)$, and using the fact that $(\Phi \circ \Psi)'(w)=(\Phi \circ \Phi^{-1})'(w)=1$,  we find

\begin{equation}
\begin{split}
&\tilde\rho(n)=\int_{\partial \mathbb A_r} {\sqrt{\Phi'(\Psi(w))} (f \circ \Psi)(w) w^{n} \Psi'(w) \,\dd w} \\
&=\int_{\partial \mathbb A_r} {\sqrt{\Phi'(\Psi(w)) \Psi'(w)} (f \circ \Psi)(w) [\Phi \circ \Psi]^{n} (w) \sqrt{\Psi'(w)} \,\dd w} \\
&=\int_{\partial \mathbb A_r} {[(f \cdot \Phi^{n}) \circ \Psi](w) \sqrt{\Psi'(w)} \,\dd w}.
\end{split}
\end{equation}
It is known that $E^2(D)=H^2(D)$ if all boundary curves are analytic \cite[p. 182]{PD70}. Thus it follows from (\ref{EpHp}) that $f \cdot \Phi^{n} \in E^2(D)$ if and only if $[f \cdot \Phi^{n}] \circ \Psi \cdot \sqrt{\Psi'} \in H^2(\mathbb{A}_r)=E^2(\mathbb{A}_r)$. It was already noted above that $\Phi$ extends continuously to the boundary. Thus it follows that $f\cdot \Phi^n\in E^2(D)$. The equality in (\ref{GR tau(n)}) now follows from Cauchy's theorem for $E^1$ and the fact that $E^2(D)\subset E^1(D)$ when $D$ is bounded.

\section{Alternative Integral Representations for $\mathbb A_r$}
In this section, we write alternative integral representations for $S_{\mathbb A_r}$ that arise from the Watson transform. We give two such representations here. First, we have
\begin{align*}
    \overline{S_{\mathbb A_r}(z,\zeta)}=\int_{C}\frac{(z\overline\zeta)^n}{(1-e^{2\pi ik})r^{2n+1}}dk+\int_{C+1}\frac{1}{(1-e^{2\pi ik)}(r+r^{2n+1})}\Big(\frac{r^2}{z\overline\zeta}\Big)^ndk,
\end{align*}
where $C$ is the left semi-circle of radius $0<\delta<1$ (adequately small) and two horizontal half-lines with endpoints $\pm i\delta$ as pictured in figure (\ref{WatsonContour}). Let $C+1$ be the contour $C$ translated to the right by 1. As was noted in $\cite{C15}$, the contour $C$ is within a distance of 1 from the singularities of the integrand along the real axis. Hence this will not be the optimal choice of contour for computation. The integrands in the spectral decomposition (\ref{spectralSzegoAnnulus}) developed in this work for the Szeg\H o kerenl decay exponentially. However, we may deform the contour $C$ in order to move the contour away from the singularities of the integrands. This new deformed contour $C'$ is pictured in figure (\ref{WatsonContourDeformed}). As before, let $C'+1$ be the contour $C'$ translated to the right by one. Then we have the following integral representation for $S_{\A_r}(z,\zeta)$:
% \noindent
% To this end, we claim that $\Phi$ extends to $\Phi \in C(\overline{D}) \cap \vartheta(D)$. To see this we have that $\Psi$ extends to a homeomorphism $\Psi:\overline{\mathbb{A}_r}\mapsto \overline{D}$ (more precisely, $\Psi \in \vartheta(\mathbb{A}_r) \cap C(\overline{\mathbb{A}_r})$, $\Psi: \partial \mathbb{A}_r \mapsto \partial D$ homeomorphism, $\Psi |_{\mathbb{A}_r}=\psi$). Hence $\Psi^{-1}: \overline{D}\mapsto\overline{\mathbb A}_r $ is a homeomorphism and we may take $\Phi \coloneqq \Psi^{-1}$. 

\begin{figure*}[t!]
    \centering
    \begin{subfigure}[t]{0.5\textwidth}
        \centering
        \includegraphics[width=0.7\textwidth]{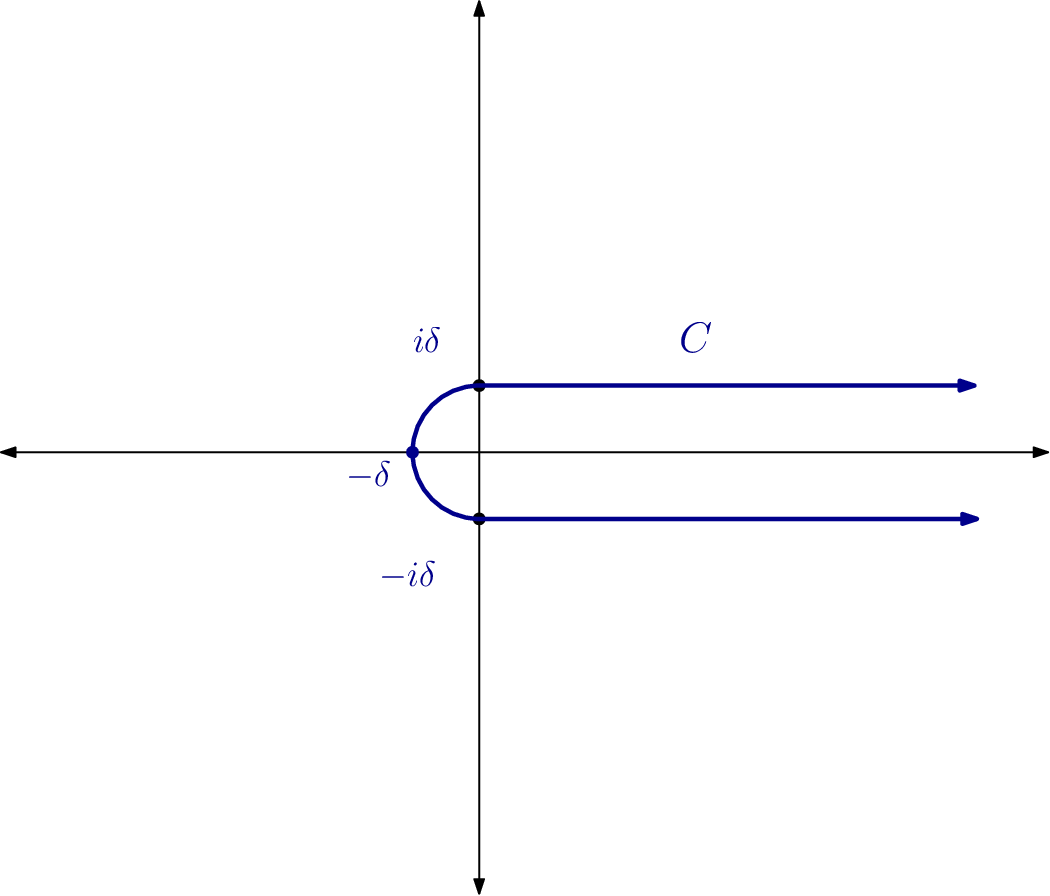}
        \caption{}
        \label{WatsonContour}
    \end{subfigure}%
    ~
    \begin{subfigure}[t]{0.4\textwidth}
        \centering
        \includegraphics[width=0.9\textwidth]{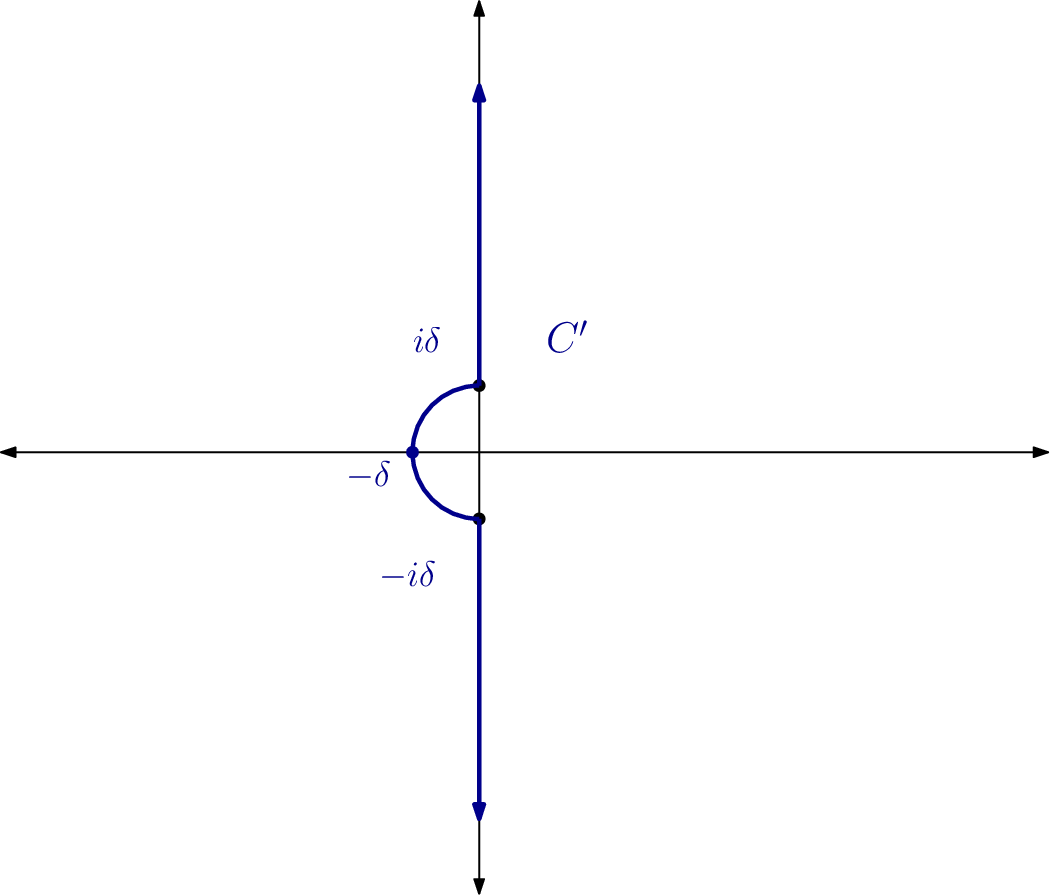}
        \caption{}
        \label{WatsonContourDeformed}
    \end{subfigure}
    \caption{(a) The Watson Contour (b) The Watson Contour deformed}
\end{figure*}

    \begin{equation}\label{AltSzego}\begin{split}
        &\overline{S_{\mathbb A_r}(z,\zeta)}= \frac{1}{2 \pi i}\int_{C'+1}\frac{r^{2t-1}}{1+r^{2t+1}}\frac{e^{i\pi t}}{e^{2i\pi t}+1}\frac{1}{(z\overline\zeta)^t}dt+ \frac{1}{ 2\pi i}\int_{C'}\frac{1}{1+r^{2t+3}}\frac{e^{i\pi t}}{e^{2i\pi t}+1}(z\overline\zeta)^tdt
        \\&= \frac{1}{2 \pi i}\int_{C'+1}\frac{r^{2t-1}}{1+r^{2t+1}}\frac{1}{\sin(\pi t)}\frac{1}{(z\overline\zeta)^t}dt+ \frac{1}{ 2\pi i}\int_{C'}\frac{1}{1+r^{2t+3}}\frac{1}{\sin(\pi t)}(z\overline\zeta)^tdt.
        \end{split}
    \end{equation}
We chose the representation for $S_{\A_r}(z,\zeta)$ given in (\ref{spectralSzegoAnnulus}) and the associated contour in figure \ref{fundamentalContour} for its connection to the unified transform method as discussed in \cite{C15,C15B,HLLS25} and its connection to the Fourier-Mellon transform as discussed in \cite{C15}. See \cite{GM09} (Chapter 4.2) for a discussion of the Watson transform. It would be interesting to compare the numerical effectiveness of the integral representation for the Szeg\H o kernel given in (\ref{AltSzego}) and (\ref{spectralSzegoAnnulus}).

\section{Transform Pairs for any bounded Doubly Connected Domains}
Two annuli, $A_1=\{z:r_1<|z|<R_1\}$ and $A_2=\{z:r_1<|z|<R_2\}$, are conformally equivalent if and only if $r_1/R_1=r_2/R_2$. The Szeg\H o kernel for the annulus $\mathbb A_{r,R}$ is 
\begin{align*}
    \overline{S_{\mathbb A_{r,R}}(z,\zeta)}=\frac{1}{2\pi}\sum_{n=-\infty}^\infty\frac{(z\overline\zeta)^n}{r^{2n+1}+R^{2n+1}}.
\end{align*}
For any annulus $\mathbb A_{r,R}$, there will be a $0<s<1$ such that $\mathbb A_{s}$ is conformally equivalent to $\mathbb A_{r,R}$.
Indeed, consider the conformal map $\Phi(z)=z/R$. The map $\Phi(z)$ will map the annulus $\mathbb A_{r,R}$ to the annulus $\mathbb{A}_{r/R}$. Then by the transformation law \ref{E:Szegokernel} for the Szeg\H o kernel, we have
\begin{align*}
    &\overline{S_{\mathbb A_{r,R}}(z,\zeta)}=\frac{1}{\sqrt{r}}S_{\mathbb A_r}(\Phi(z),\Phi(\zeta))\frac{1}{\overline{\sqrt{r}}}
    \\&=\frac{1}{2\pi r}\sum_{n=-\infty}^{\infty}\frac{(z/r)^n(\overline{\zeta/r})^n}{1+(r/R)^{2n+1}}.
\end{align*}
Thus the transform pair developed in section \ref{NewTransPairSec} holds for any smooth bounded doubly connected domain that is conformally equivalent to the annulus $\mathbb A_{r,R}$ with the substitutions $z\rightarrow z/r$, $\zeta\rightarrow \zeta/r$, and $r\rightarrow r/R$ (and adjusting for the extra factor of $1/r$). All bounded doubly connected domains are conformally equivalent to an annulus or to the punctured disk. A Transform pair for punctured disks was developed in \cite{HLLS25}.

\section{Applications}
In this section, the modified Schwarz problem and a related boundary value problem for two doubly connected domains will be implement via the newly developed transform pair. Let $D$ be a doubly connected domain with boundary curves $\Gamma_1$ and $\Gamma_2$ (neither of which are a single point), and let $f=u+iv$ be a holomorphic function on $D$ with prescribed real part $u_1(\zeta)=\Real f(\zeta)$ on $\Gamma_1$ and $u_2(\zeta)=\Real f(\zeta)$ on $\Gamma_2$. The \textbf{modified Schwarz problem} is to find the missing boundary data $v_1(\zeta)=\Imag f(\zeta)$ on $\Gamma_1$ and $v_2(\zeta)=\Imag f(\zeta)$ on $\Gamma_2$, so that $f$ is holomorphic and single-valued on $D$. A closely related variant of this problem is to find the real part of $f$ on $\Gamma_1$ and the imaginary part of on $\Gamma_2$ given the real part of $f$ on $\Gamma_2$ and the imaginary part of $f$ on $\Gamma_1$. 

\subsection{Boundary value problems for doubly connected domains}
Let $u$ be a harmonic function on a doubly connected bounded domain $D$ with smooth boundary curves. We assume that $u$ is a single-valued on $D$. There are numerous boundary value problems to be considered, including, Dirichlet, Nuemann, and Robin boundary value problems. Given boundary data, we may use the transform pair developed in (\ref{transformDoublyConnected}) to numerically solve for a holomorphic function $f=u+iv$ where $v$ is a harmonic conjugate to $u$ on $D$. However, unlike the simply connected case, $f$ is not guaranteed to be single-valued. Functions in $E^2(D)$ though are taken to be single valued. Despite this difficulty, the single-valuedness of $u$ does guarantee the single-valuedness of $f'$ if $\Gamma_1$ and $\Gamma_2$ are assumed to be circular. See \cite{DC09} and references therein for a discussion of this fact. However, this result easily generalizes to any bounded domain that is conformally equivalent to an annulus. In particular, consider the case when $D=\A_r$. Then $f'$ is single-valued, and we have the following transform pair for $f'$
\begin{equation}\label{transformAnnulusDeriv}
        f'(z)=\frac{1}{2\pi i}\Big[\int_{L_1}\frac{1}{1-e^{2\pi i k}}\tau(z,k)dk+\int_{L_2}\tau(z,k)dk+\int_{L_3}\frac{e^{2\pi  ik}}{1-e^{2\pi  ik}}\tau(z,k)dk\Big],
    \end{equation}
    where
\begin{align*}
        &\rho_{1m}(k)=\int_{\partial  D^m}\frac{f'(\zeta_m)}{\zeta_m^{k+1}}d\zeta_m \hspace{1cm} k\in \C,m=1,2,
        \\& \rho_{2m}(k)=r^{2k+1}\int_{\partial D^m}f'(\zeta_m)\zeta_m^{k}d\zeta_m \hspace{1cm} k\in \C,m=1,2.
    \end{align*}
    % Additionally, define
    % \begin{equation}
    %     \tau(z,k)=\frac{1}{1+r^{2k+1}}\Big[z^k(\rho_{11}(k)+\rho_{22}(-k-1))+\frac{1}{z^{k+1}}(\rho_{21}(k)+\rho_{12}(-k-1))\Big].
    % \end{equation}
One can recover an expression for $f(z)$ by integrate (\ref{transformAnnulusDeriv}) with respect to $z$:
\begin{equation}
        f(z)=\frac{1}{2\pi i}\Big[\int_{L_1}\frac{1}{1-e^{2\pi i k}}\tilde\tau(z,k)dk+\int_{L_2}\tilde\tau(z,k)dk+\int_{L_3}\frac{e^{2\pi  ik}}{1-e^{2\pi  ik}}\tilde\tau(z,k)dk\Big],
    \end{equation}
    where 
    \begin{equation}\label{tildetau}
        \tilde\tau(z,k)=\frac{1}{1+r^{2k+1}}\Big[\frac{z^{k+1}}{k+1}(\rho_{11}(k)+\rho_{22}(-k-1))-\frac{1}{kz^{k}}(\rho_{21}(k)+\rho_{12}(-k-1))\Big].
    \end{equation}
The expressions for $\rho_{1m}$ and $\rho_{2m}$ can also be written in terms of $f$ by applying integration by parts. However, this approach using the general transform pair given in (\ref{transformDoublyConnected}) is far more complicated. One would need to integrate $\sqrt{\Phi'}$ and $\Phi^k$, and some conformal maps may not have a holomorphic anti-derivative. For example, conformal maps with a simple pole may have a holomorphic anti-derivative. 

\par Next, let us consider the case of Dirichlet boundary values. It is a well known fact that $u$ has a harmonic conjugate $v$ such that $f=u+iv$ is single-valued on $D$ if and only if 
\begin{align*}
     \int_{\Gamma_j}\frac{\partial u}{\partial n}ds=0, \hspace{1cm}1\leq j\leq 2.
 \end{align*}
Note that if $u$ extends to be a harmonic function on the interior of $\Gamma_2$, then the above condition will always be satisfied.
% \red{ See "Conformal mappings of multiply-connected domains" pg. 12 by Jacob S. Huber for a proof.}\color{black}

\subsection{Eccentric annulus}
\begin{figure*}[t!]

    \centering
\includegraphics[width=0.7\textwidth]{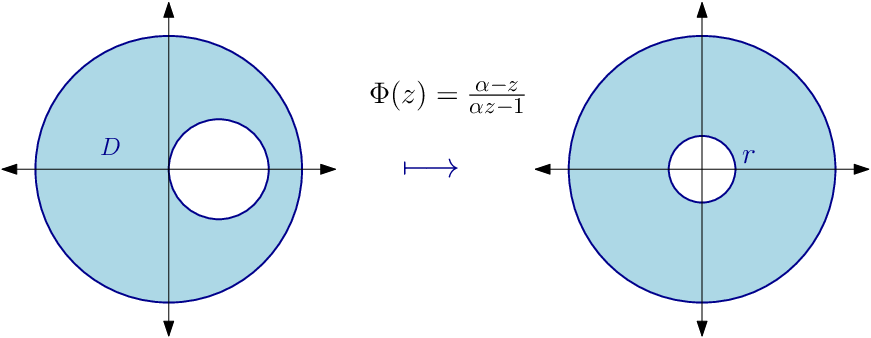}
    
    \caption{An eccentric annular region.}
    \label{EccentricAnnulusGraph}
\end{figure*}

The first application to be considered is the modified Schwarz problem on an eccentric annulus. The current method will be verified against the method developed by Crowdy in \cite{C15B}. The eccentric annulus $D$ under consideration is pictured in figure (\ref{EccentricAnnulusGraph}). Precisely,
\begin{align*}
    D=\{ z\in \C: |z|<1, |z-z_-0|>r_0 \},
\end{align*}
for some $z_0\in\D$ and $0<r_0<\text{dist }(z_0,\partial \D)$. The outer boundary curve, $\Gamma_1$, is the unit circle (centered at the origin), and the inner boundary curve, $\Gamma_2$, is a circle with radius $r_0$ and center $z_0$. We wish to find a conformal map from $D$ to $\mathbb A_r$ for some $r<1$. Such a conformal map will be a linear fraction transformation of the form
\begin{align*}
    \Phi(\zeta)=\frac{\alpha-\zeta}{\alpha \zeta-1},
\end{align*}
where 
\begin{align*}
    &\alpha = \frac{-(r_0^2-z_1^2-1)-\sqrt{(r_0^2-z_1^2-1)^2-4z_0^2}}{2z_0},
\end{align*}
and 
\begin{align*}
    \\& r=\frac{z_0-r_0-\alpha}{\alpha(z_0-r_0)-1}.
\end{align*}
Note that given a doubly connected domain $D$, one must find a $r<1$ such that $\mathbb A_r$ is confomrally equivalent to $D$. Consider the following modified Schwarz problem for $D$. We wish to find an $f\in E^2(D)$ such that
\begin{align*}
    \begin{cases}
\overline\partial f(z)=0, & z\in D\\ \Real f(\zeta)=u_1(\zeta), & \zeta\in \Gamma_1\\
\Real f(\zeta)=u_2(\zeta), & \zeta\in \Gamma_2
    \end{cases}
\end{align*}    
The goal is two-fold. First, find the imaginary part of $f$ on $\Gamma_1$, denoted by $v_1$, and the imaginary part of $f$ on $\Gamma_2$, denoted by $v_2$. Second, recover the interior values of $f$ by using (\ref{transformDoublyConnected}). The missing boundary data will be found using the global relation given in (\ref{GR}):
 \begin{align*}
      \rho_{j1,\Phi}(k)+\rho_{j2,\Phi}(k)=0, \hspace{5mm} k\in \mathbb N\cup \{0\}, j=1,2.
    \end{align*}
On $\Gamma_1=\partial\mathbb D$ we write 
\begin{align*}
    v_1(\zeta)=a_0+\sum_{n=1}^\infty a_n\zeta^n+\sum_{n=1}^\infty \overline{a_n}\overline{\zeta^n},
\end{align*}
where the coefficients $a_0 \in \mathbb{R}$ and $\{a_n \in \mathbb C | n=1,2,\dots \}$ are to be determined. On $\Gamma_2=\partial D_{r_0}(z_0)$, we write
\begin{align*}
    v_2(\zeta)=b_0+\sum_{n=1}^\infty b_n\Big(\frac{\zeta-z_0}{r_0}\Big)^n+\sum_{n=1}^\infty \overline {b_n}\Big(\frac{\overline{\zeta-z_0}}{r_0}\Big)^n,
\end{align*}
where the coefficients $b_0 \in \mathbb{R}$ and $\{b_n \in \mathbb C | n=1,2,\dots \}$ are to be determined. We choose the standard parametrization for $\Gamma_1$:
\begin{align*}
    \zeta_1(\theta)=e^{i\theta}, \hspace{1cm} \theta\in[0,2\pi].
\end{align*}
We then have
\begin{equation}\label{Seriesan}
f(\zeta_1(\theta))=u(\zeta_1(\theta))+\ci\Big(a_0+\sum_{n\geq 1}\Big[a_n\ee^{\ci n\theta}+\overline{a_n} \ee^{- \ci n\theta}\Big]\Big).  
\end{equation}
 We also choose the standard parametrization for $\Gamma_2$:
\begin{align*}
    \zeta_2(\theta)=z_0+r_0e^{i\theta}, \hspace{1cm} \theta\in[0,2\pi].
\end{align*}
We then have
\begin{equation}\label{seriesbn}
f(\zeta_2(\theta))=u(\zeta_2(\theta))+\ci\Big(b_0+\sum_{n\geq 1}\Big[b_n\ee^{\ci n\theta}+\overline{b_n} \ee^{- \ci n\theta}\Big]\Big). 
\end{equation}
{\color{blue}}
We have two global relations. For $j=1$
\begin{align*}
  &0=  \rho_{11,\Phi}(k)+\rho_{12,\Phi}(k)=\int_{\Gamma_1}\frac{f(\zeta)\sqrt{\Phi'(\zeta)
        }}{\Phi(\zeta)^{k+1}}d\zeta +\int_{\Gamma_2}\frac{f(\zeta)\sqrt{\Phi'(\zeta)
        }}{\Phi(\zeta)^{k+1}}d\zeta 
        \\&=\int_{0}^{2\pi}\frac{\sqrt{\Phi'(\zeta_1(\zeta))
        }}{\Phi(\zeta_1(\theta))^{k+1}}\Big(u(\zeta_1(\theta))+\ci\Big(a_0+\sum_{n\geq 1}\Big[a_n\ee^{\ci n\theta}+\overline{a_n} \ee^{- \ci n\theta}\Big]\Big)\Big)\zeta_1'(\theta)d\theta
        \\&-\int_{0}^{2\pi}\frac{\sqrt{\Phi'(\zeta_2(\zeta))
        }}{\Phi(\zeta_2(\theta))^{k+1}}\Big(u(\zeta_2(\theta))+\ci\Big(b_0+\sum_{n\geq 1}\Big[b_n\ee^{\ci n\theta}+\overline{b_n} \ee^{- \ci n\theta}\Big]\Big)\Big)\zeta_2'(\theta)d\theta.
\end{align*}
We may rewrite the equation above as

\begin{equation}\label{EccAnnEqns1}
\begin{split}
&a_0\cA(0,k)+\sum_{n=1}^\infty\Big(a_n\cA(n,k)+\overline{a_n} \cA(-n,k)\Big) \\
&~~~~~~+b_0\cB(0,k)+\sum_{n=1}^\infty\Big(b_n \cB(n,k)+\overline{b_n} \cB(-n,k)\Big)=s(k), \quad k \in \mathbb{N}\cup\{0\}, \\
&a_0\overline{\cA(0,k)}+\sum_{n=1}^\infty\Big(\overline{a_n} \overline{\cA(n,k)}+ a_n\overline{\cA(-n,k)}\Big) \\
&~~~~~~+b_0\overline{\cB(0,k)} +\sum_{n=1}^\infty\Big(\overline{b_n} \overline{\cB(n,k)}+ b_n\overline{\cB(-n,k)}\Big)=\overline{s(k)}, \quad k \in \mathbb{N}\cup\{0\},
\end{split}
\end{equation}
where
\begin{align*}
    \cA(n,k)=i\int_0^{2\pi}\frac{\sqrt{\Phi'(\zeta_1(\zeta))
        }}{\Phi(\zeta_1(\theta))^{k+1}}e^{in\theta}\zeta_1'(\theta)d\theta,
\end{align*}
\begin{align*}
\cB(n,k)=-i\int_0^{2\pi}\frac{\sqrt{\Phi'(\zeta_2(\zeta))
        }}{\Phi(\zeta_2(\theta))^{k+1}}e^{in\theta}\zeta_2'(\theta)d\theta,
\end{align*}
and
\begin{align*}
    s(k)=-\int_0^{2\pi}\frac{\sqrt{\Phi'(\zeta_1(\zeta))
        }}{\Phi(\zeta_1(\theta))^{k+1}}u(\zeta_1(\theta))\zeta_1'(\theta)d\theta+\int_0^{2\pi}\frac{\sqrt{\Phi'(\zeta_2(\zeta))
        }}{\Phi(\zeta_2(\theta))^{k+1}}u(\zeta_2(\theta))\zeta_2'(\theta)d\theta.
\end{align*}
 Likewise, with $\quad k \in \mathbb{N}\cup\{0\}$, the global relation for $j=2$ gives

\begin{equation}\label{EccAnnEqns2}
\begin{split}
&a_0\cA(0,-k-1)+\sum_{n=1}^\infty\Big(a_n\cA(n,-k-1)+\overline{a_n} \cA(-n,-k-1)\Big) \\
&~~~~~~+b_0\cB(0,-k-1)+\sum_{n=1}^\infty\Big(b_n \cB(n,-k-1)+\overline{b_n} \cB(-n,-k-1)\Big)=s(-k-1), \\
&a_0\overline{\cA(0,-k-1)}+\sum_{n=1}^\infty\Big(\overline{a_n} \overline{\cA(n,-k-1)}+ a_n\overline{\cA(-n,-k-1)}\Big) \\
&~~~~~~+b_0\overline{\cB(0,-k-1)} +\sum_{n=1}^\infty\Big(\overline{b_n} \overline{\cB(n,-k-1)}+ b_n\overline{\cB(-n,-k-1)}\Big)=\overline{s(-k-1)}.
\end{split}
\end{equation}
Next, Crowdy's method is implemented in a similar manner. The first global relation is
\begin{align*}
    0=\int_{\partial \D}f(\zeta)z^{-k-1}dz-\int_{\partial D_{r_0}(z_0)} f(z)z^{-k-1}dz, \hspace{1cm} k\in -\mathbb N.
\end{align*}

Using the same parameterizations $\zeta_1(\theta)$ and $\zeta_2(\theta)$ as above, we have
\begin{equation}\label{EccAnnEqnsCr}
\begin{split}
&a_0{\cA}'(0,k)+\sum_{n=1}^\infty\Big(a_n{\cA}'(n,k)+\overline{a_n} {\cA}'(-n,k)\Big) \\
&~~~~~~+b_0{\cB}'(0,k)+\sum_{n=1}^\infty\Big(b_n {\cB}'(n,k)+\overline{b_n} {\cB}'(-n,k)\Big)=s'(k), \quad k \in \mathbb{N}\cup\{0\}, \\
&a_0\overline{{\cA}'(0,k)}+\sum_{n=1}^\infty\Big(\overline{a_n} \overline{{\cA}'(n,k)}+ a_n\overline{{\cA}'(-n,k)}\Big) \\
&~~~~~~+b_0\overline{{\cB}'(0,k)} +\sum_{n=1}^\infty\Big(\overline{b_n} \overline{{\cB}'(n,k)}+ b_n\overline{{\cB}'(-n,k)}\Big)=\overline{s'(k)}, \quad k \in \mathbb{N}\cup\{0\},
\end{split}
\end{equation}
where
\begin{align*}
    {\cA}'(n,k)=i\int_0^{2\pi}\frac{e^{in\theta}\zeta_1'(\theta)}{(\zeta_1(\theta))^{k+1}}d\theta,
\end{align*}
\begin{align*}
{\cB}'(n,k)=-i\int_0^{2\pi}\frac{e^{in\theta}\zeta_2'(\theta)}{(\zeta_2(\theta))^{k+1}}d\theta,
\end{align*}
and
\begin{align*}
    s'(k)=-\int_0^{2\pi}\frac{u(\zeta_1(\theta))\zeta_1'(\theta)}{(\zeta_1(\theta))^{k+1}}d\theta+\int_0^{2\pi}\frac{u(\zeta_2(\theta))\zeta_2'(\theta)}{(\zeta_2(\theta))^{k+1}}d\theta.
\end{align*}
Likewise, the corresponding equations can be generated for the other global relation. 

Now, the solution of this boundary value problem will be unique up to a constant, so we may assume WLOG that $a_0=0$ and $b_0=0$. So we may omit the $n=0$ equation from the set of equations above. The series (\ref{Seriesan}) and (\ref{seriesbn}) are truncated up to $n=N$ to include only a finite number of unknown coefficients. Then (\ref{EccAnnEqns1}) and (\ref{EccAnnEqns2}) generate a system of equations with $4N$ unknowns. Then evaluating (\ref{EccAnnEqns1}) and (\ref{EccAnnEqns2}) at $k=1,2,\dots,K$ generates a system of equations (over-determined if $K>N$) that is solved with a least-squares algorithm. The following example was implement in Matlab with the following parameters: $r_0=1/3, z_0=1/4, u_1(\zeta)=\Real(e^\zeta), u_2(\zeta)=\Real((\zeta-z_0)^{-2})$. The coefficients $\{a_n,b_n: n\geq 1\}$ decay quickly for both the current method and for the method presented in \cite{C15B}, so $N$ was chosen to be $N=8$ and $K=12$. Figure \ref{EccentricAnnulusBoundaryVlues} shows the the numerical solutions for $v_1(\zeta_1(\theta))$ and $v_2(\zeta_2(\theta))$ are comparable for both the current method and for the method presented in \cite{C15B}.

\begin{figure}
\centering
\includegraphics[width=0.7\textwidth]{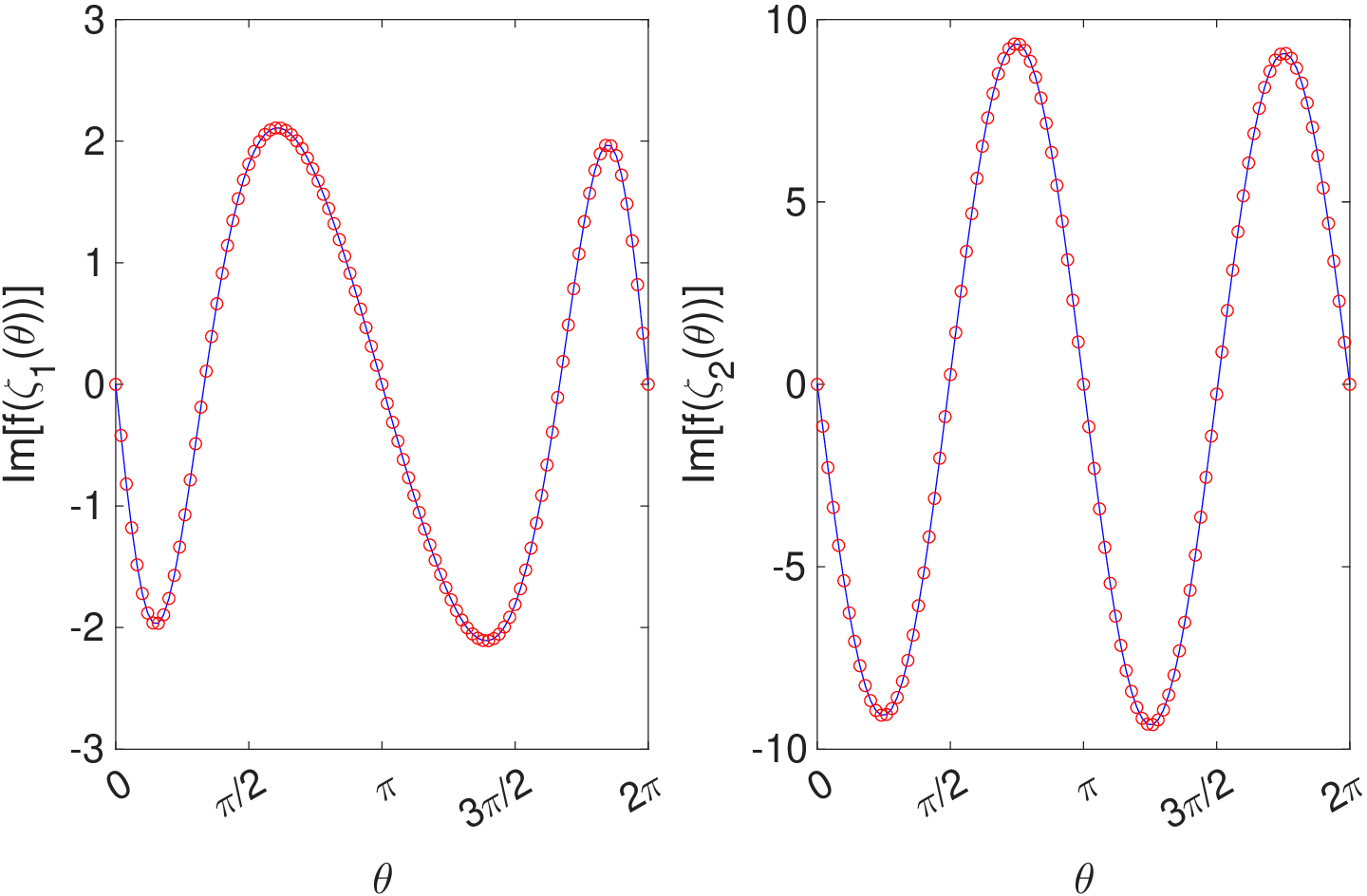}
\caption{The current method is graphed with a solid blue line and the method in \cite{C15B} is graphed with red dots.}
\label{EccentricAnnulusBoundaryVlues}
\end{figure}

\subsection{Elliptical Annular Region}

\begin{figure*}[t!]

    \centering
\includegraphics[width=0.9\textwidth]{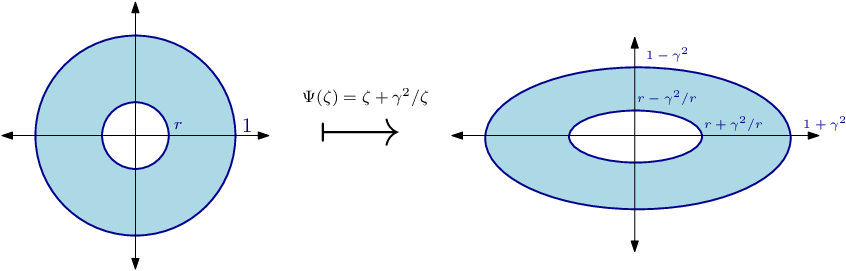}
    
    \caption{An elliptical annular region.}
    \label{EllipticalAnnulusGraph}
\end{figure*}
Next, a variant of the modified Schwarz problem will be implemented on a doubly connected domain $D$ whose boundary curves are non-circular. Define the domain $D$ as

\begin{align*}
    D:=\Big\{z=x+iy: \frac{x^2}{(1+\gamma^2)^2}+\frac{y^2}{(1-\gamma^2)^2}<1,\frac{x^2}{(r+\gamma^2/r)^2}+\frac{y^2}{(r-\gamma^2/r)^2}>1 \Big\},
\end{align*}
where $0<\gamma<r<1$ as pictured in (\ref{EllipticalAnnulusGraph}). The holomorphic function $\Phi:D\rightarrow \A_r$, $\Phi(\zeta)=\frac{1}{2}(\zeta+\sqrt{\zeta^2-4\gamma^2})$, will conformally map $D$ onto $\A_r$.  The inverse of $\Phi$ is $\Phi^{-1}(\zeta)=\Psi(\zeta)=\zeta+\gamma^2/\zeta$. Note that $\Phi$ and its derivative are single-valued on $D$. Indeed, the two branch points of $\Phi$, $\zeta^2-4\gamma^2=0$, need to lie in the same component of the complement of $D$. However, $2\gamma<r+\gamma^2/r \Longleftrightarrow 0<(r-\gamma)^2$, hence both lie within the bounded component of the complement of $D$. Let $\Gamma_1$ denote the outer boundary of $D$: 
\begin{align*}
   \Gamma_1= \Big\{z=x+iy:\frac{x^2}{(1+\gamma^2)^2}+\frac{y^2}{(1-\gamma^2)^2}=1\Big\},
\end{align*}
and let $\Gamma_2$ denote the inner boundary of $D$: 
\begin{align*}
\Gamma_2=\Big\{z=x+iy: \frac{x^2}{(r+\gamma^2/r)^2}+\frac{y^2}{(r-\gamma^2/r)^2}=1\Big\}.\end{align*}
We wish to find an $f\in E^2(D)$ such that

\begin{align*}
    \begin{cases}
\overline\partial f(z)=0, & z\in D\\ \Real f(\zeta)=u_1(\zeta), & \zeta\in \Gamma_1\\
\Imag f(\zeta)=v_2(\zeta), & \zeta\in \Gamma_2
    \end{cases}
\end{align*}
where $u_1$ and $v_2$ is given boundary data. The transform method presented in this work will be used to solve this problem in the following manner. First, complete the missing boundary data: find $\Imag f=v_1$ on $\Gamma_1$ and $\Real f=u_2$ on $\Gamma_2$. Second, recover the interior values of $f$ by using (\ref{transformDoublyConnected}). The missing boundary data will be found by using the global relations given in (\ref{GR}).
The global relation given in (\ref{GR}) involves evaluating $\sqrt{\Phi'}$ along $\partial \D$. We remark that an equivalent statement of analyticity given by the global relation (\ref{GR}) is given by the pullback of (\ref{GR}) to $\A_r$:

\begin{align}\label{GRPullBack}
\begin{split}
  &0=\int_{\partial D}\frac{f(w)\sqrt{\Phi'(w)}}{\Phi(w)^{k+1}}dw =\int_{\partial \A_r}\frac{f(\Psi(\zeta))\sqrt{\Phi'(\Psi(\zeta))
        }\Psi'(\zeta)}{\Phi(\Psi(\zeta))^{k+1}}d\zeta
        \\&= \int_{\partial \D}\frac{f(\Psi(\zeta))\sqrt{\Psi'(\zeta)
        }}{\zeta^{k+1}}d\zeta+\int_{\partial D_r(0)}\frac{f(\Psi(\zeta)\sqrt{\Psi'(\zeta)
        }}{\zeta^{k+1}}d\zeta.
        \end{split}
\end{align}
For certain domains or applications, one may choose to use the equivalent global relation above for simplicity or practicality. We will use (\ref{GRPullBack}) to analyze this boundary value problem as certain expressions were shorter when working with $\Psi$ as oppose to $\Phi$.
For $\zeta\in \Gamma_1$, write
\begin{align}\label{v1Rep}
v_1(\zeta)=a_0+\sum_{n=1}^\infty a_n  \Phi(\zeta)^n+\sum_{n=1}^\infty \overline{a_n}  \overline{\Phi(\zeta)^n}.
\end{align}
For $\zeta\in \Gamma_2$, write
\begin{align}\label{v2Rep}
u_2(\zeta)=b_0+\sum_{n=1}^\infty b_n  \Phi(\zeta)^n+\sum_{n=1}^\infty \overline{b_n}  \overline{\Phi(\zeta)^n}.
\end{align}
Next, consider the parametrization of $\Gamma_1$ given by the conformal map $\Psi$:
\begin{align}\label{para1}
    \theta\mapsto\Psi\Big|_{\partial \D}(e^{i\theta})=e^{i\theta}+\gamma^2/e^{i\theta},\quad \theta\in[0,2\pi]
\end{align}
and the parametrization of $\Gamma_2$ given by
\begin{align}\label{para2}
    \theta\mapsto\Psi\Big|_{\partial D_r(0)}(r e^{i\theta})=r e^{i\theta}+\frac{\gamma^2}{r e^{i\theta}},\quad \theta\in[0,2\pi].
\end{align}
It follows that
\begin{align*}
    &v_1(\Psi(e^{i\theta}))=a_0+\sum_{n=1}^\infty a_n e^{in\theta}+\sum_{n=1}^\infty \overline{a_n} e^{-in\theta}
\end{align*}
or equivalently
\begin{align}\label{v1ParaZeta}
    v_1(\Psi(\zeta))=a_0+\sum_{n\geq 1}a_n\zeta^n+\sum_{n\geq 1}\overline {a_n}\overline{\zeta^n}=a_0+\sum_{n\geq 1}a_n\zeta^n+\sum_{n\geq 1}\overline {a_n}\zeta^{-n},\hspace{1cm} \zeta \in \partial \D
\end{align}
and
\begin{align*}
    u_2(\Psi(r e^{i\theta}))=b_0+\sum_{n=1}^\infty b_n r^n e^{in\theta}+\sum_{n=1}^\infty \overline{b_n}r^n e^{-in\theta}  
\end{align*}
or equivalently
\begin{align}\label{v2ParaZeta}
    u_2(\Psi(\zeta))=b_0+\sum_{n\geq 1}b_n\zeta^n+\sum_{n\geq 1}\overline {b_n}\ \overline{\zeta^n}=b_0+\sum_{n\geq 1}b_n\zeta^n+\sum_{n\geq 1}\overline{b_n}r^{2n}\zeta^{-n} \hspace{1cm} \zeta\in\partial D_r(0).
\end{align}
\textbf{Remark:} When evaluating $f$ at an interior point $z\in D$ using the expression given in (\ref{transformDoublyConnected}), choosing the parametrization for $\partial \D$ and $\partial D_r(0)$ given in (\ref{para1}) and (\ref{para2}) is simplest because of the choice of representation of the functions $v_1$ and $u_2$ given in (\ref{v1Rep}) and (\ref{v2Rep}).

Next, substituting the representation for $v_1$ and $u_2$ found in (\ref{v1ParaZeta}) and (\ref{v2ParaZeta}) into the equivalent global relation (for $j=1$) given in (\ref{GRPullBack}) yields
\begin{align*}
   &ia_0\int_{\partial \D}\frac{\sqrt{\Psi'(\zeta)}}{\zeta^{k+1}}d\zeta+i\sum_{n=1}^\infty a_n\int_{\partial \D}\frac{\sqrt{\Psi'(\zeta)}}{\zeta^{k-n+1}}d\zeta+i\sum_{n=1}^\infty \overline{a_n}\int_{\partial \D}\frac{\sqrt{\Psi'(\zeta)}}{\zeta^{k+n+1}}d\zeta
   \\&-b_0\int_{\partial D_r(0)}\frac{\sqrt{\Psi'(\zeta)}}{\zeta^{k+1}}d\zeta-\sum_{n=0}^\infty b_n\int_{\partial D_r(0)}\frac{\sqrt{\Psi'(\zeta)}}{\zeta^{k-n+1}}d\zeta-\sum_{n=0}^\infty \overline{b_n}\int_{\partial D_r(0)}\frac{r^{2n}\sqrt{\Psi'(\zeta)}}{\zeta^{k+n+1}}d\zeta\\&=-i\int_{\partial\D}\frac{v_1(\Psi(\zeta))\sqrt{\Psi'(\zeta)}}{\zeta^{k+1}}d\zeta+\int_{\partial D_0(r)}\frac{u_2(\Psi(\zeta))\sqrt{\Psi'(\zeta)}}{\zeta^{k+1}}d\zeta.
\end{align*}
Simplify the equation above gives
\begin{equation}\label{ellipticEqns}
\begin{split}
&a_0\cA(0,k)+\sum_{n=1}^\infty\Big(a_n\cA(n,k)+\overline{a_n} \cA(-n,k)\Big) \\
&~~~~~~+b_0\cB(0,0,k)+\sum_{n=1}^\infty\Big(b_n \cB(n,k)+\overline{b_n}r^{2n} \cB(-n,k)\Big)=s(k), \quad k \in \mathbb{Z}, \\
&a_0\overline{\cA(0,k)}+\sum_{n=1}^\infty\Big(\overline{a_n} \overline{\cA(n,k)}+ a_n\overline{\cA(-n,k)}\Big) \\
&~~~~~~+b_0\overline{\cB(0,k)} +\sum_{n=1}^\infty\Big(\overline{b_n} \overline{\cB(n,k)}+ b_nr^{2n}\overline{\cB(-n,k)}\Big)=\overline{s(k)}, \quad k \in \mathbb{Z},
\end{split}
\end{equation}
where
\begin{align*}
    \cA(n,k)=i\int_{\partial \D}\frac{\sqrt{\Psi'(\zeta)}}{\zeta^{k-n+1}}d\zeta,
\end{align*}
\begin{align*}
\cB(n,k)=\int_{\partial D_{r}(0)}\frac{\sqrt{\Psi'(\zeta)}}{\zeta^{k-n+1}}d\zeta,
\end{align*}
and
\begin{align*}
    s(k)=-i\int_{\partial \D}\frac{\sqrt{\Psi(\zeta)
        }}{\zeta^{k+1}}v_1(\Psi(\zeta))d\zeta+\int_{\partial D_r(0)}\frac{\sqrt{\Psi(\zeta)
        }}{\zeta^{k+1}}u_2(\Psi(\zeta))d\zeta.
\end{align*}
A second set of equations can similarly be generated for the second global relation ($j=2$). Now, the solution of this boundary value problem will be unique up to a constant, so we may assume WLOG that $a_0=0$ and $b_0=0$. So we may omit the $n=0$ equation from the set of equations above. The series (\ref{v1ParaZeta}) and (\ref{v2ParaZeta}) are truncated up to $n=N$ to include only a finite number of unknown coefficients. Then (\ref{ellipticEqns}) and the corresponding set of equations for $j=2$ generate a system of equations with $4N$ unknowns. Then evaluating (\ref{ellipticEqns})  and the corresponding set of equations for $j=2$ at $k=1,2,\dots,K$ generates an over-determined (if $K>N$) system of equations that is solved with a least-squares algorithm. The following example was implement in Matlab with the following parameters: $\gamma=1/4, r=3/4$, $u_1(\zeta)=\Real(\zeta^{3})$, $v_2(\zeta)=\Imag(\zeta^{-1})$. The coefficients $\{a_n,b_n: n\geq 1\}$ decay quickly, so $N$ was chosen to be $N=8$ and $K=8$. Figure \ref{EllipticAnnulusRI} shows $v_1(\Psi(e^{i\theta}))$ and $u_2(\Psi(r e^{i\theta}))$.

\begin{figure}
\centering
\includegraphics[width=0.5\textwidth]{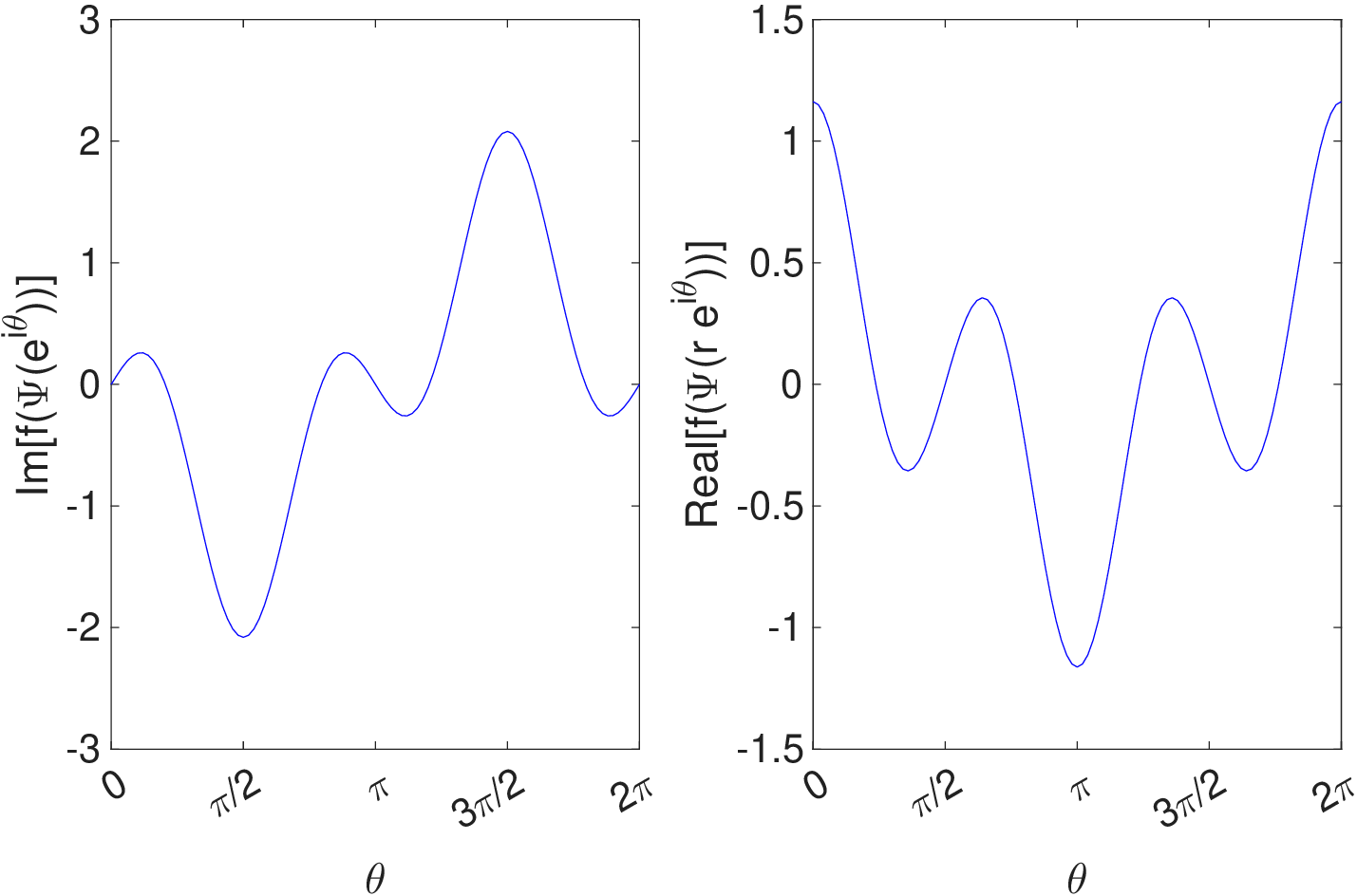}
\caption{The graphs of $v_1(\Psi(e^{i\theta}))$ and $u_2(\Psi(r e^{i\theta}))$. }
\label{EllipticAnnulusRI}
\end{figure}

\begin{figure}
\centering
\includegraphics[width=0.45\textwidth]{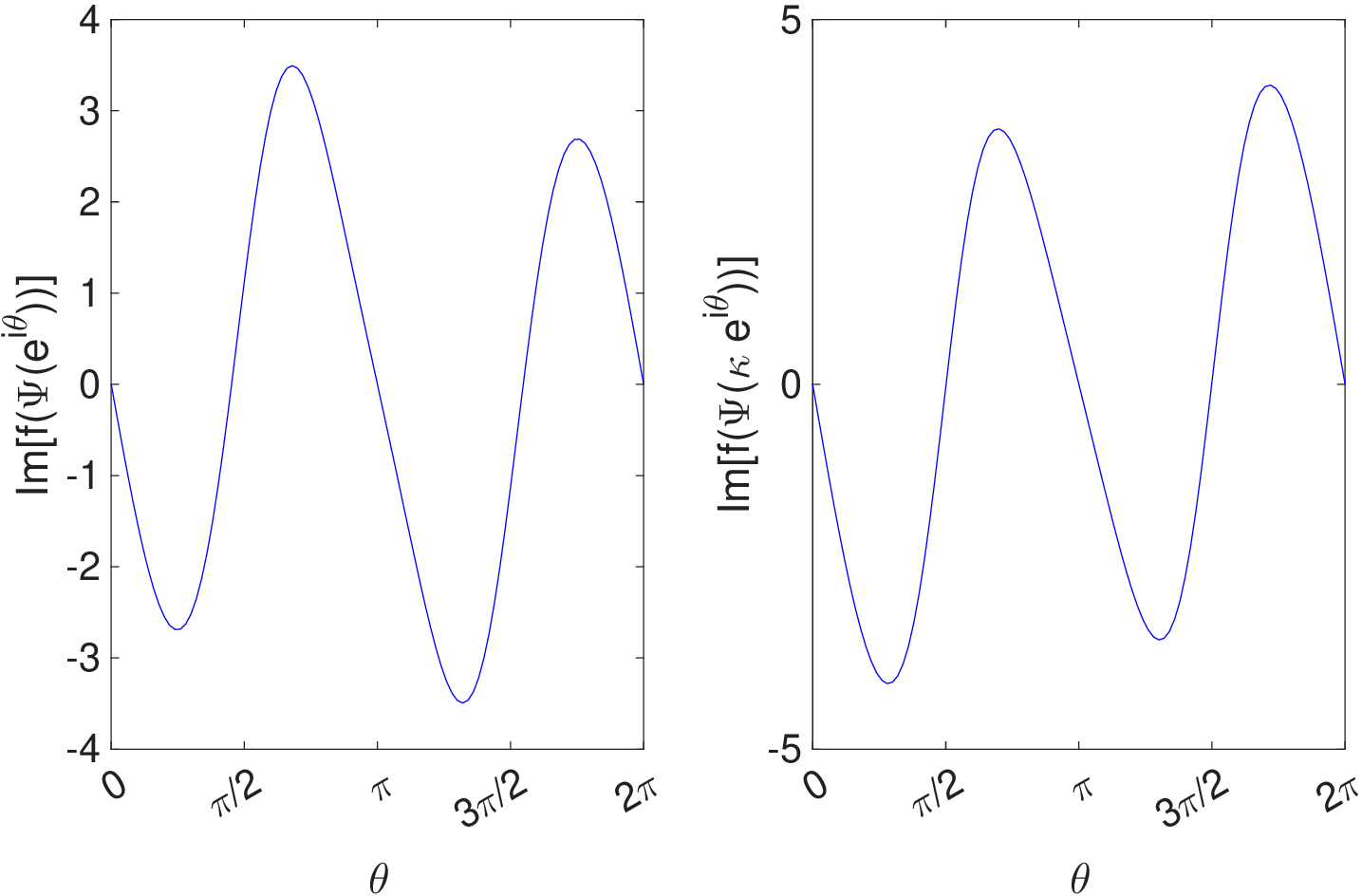}
\caption{The graphs of $v_1(\Psi(e^{i\theta}))$ and $y_2(\Psi(r e^{i\theta}))$. }
\label{EllipticAnnulusRR}
\end{figure}
The classical Schwarz problem of finding an $f\in E^2(D)$ such that
\begin{align*}
    \begin{cases}
\overline\partial f(z)=0, & z\in D\\ \Real f(\zeta)=u_1(\zeta), & \zeta\in \Gamma_1\\
\Real f(\zeta)=u_2(\zeta), & \zeta\in \Gamma_2
    \end{cases}
\end{align*} 
will be implemented in a very similar fashion. Following a similar procedure as was outlined for the boundary value problem above, one can numerically recover the missing boundary data $v_1(\Phi(\zeta))$ and $u_2(\Phi(\zeta))$. With $\gamma=1/4$ and $r=3/4$ as before and 
$u_1(\zeta)=\Real(e^{-\zeta}+z^{-1}), u_2(\zeta)=\Real(\zeta^2+\zeta^{-2})$. Figure (\ref{EllipticAnnulusRR}) shows a graph of  $v_1(\Phi(\zeta))$ and $u_2(\Phi(\zeta))$ when implemented using Matlab.

\section{Appendix}
\subsection{A}\label{appendixA}
We will prove the equality stated in equation (\ref{specdecom1}).
\begin{figure*}[t!]
\centering\includegraphics[width=0.5\textwidth]{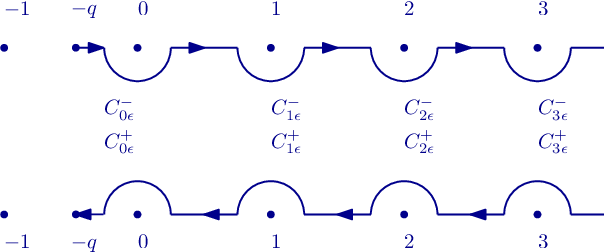}
        \caption{The contours $L_+$ (lower) and $L_-$ (higher)}
        \label{L+L-Contours}
    \end{figure*}
We use a very similar argument as was first presented in \cite{C15}. First, for $|z|<1$, set
\begin{align*}
    I=\int_{L_1}\frac{1}{1+r^{2n+1}}\frac{z^k}{1-e^{2\pi ik}}dk+\int_{L_2}\frac{z^k}{1+r^{2n+1}}dk+\int_{L_3}\frac{1}{1+r^{2n+1}}\frac{e^{2\pi ik}}{1-e^{2\pi ik}}z^kdk.
\end{align*}
Let $\mathcal P$ denote the principal value integral. The second integral in the equation above can be rewritten as
\begin{align*}
    &\int_{L_2}\frac{z^k}{1+r^{2n+1}}dk=\mathcal P\int_{L_2}\Big[\frac{1-e^{2\pi ik}}{1-e^{2\pi ik}}\Big]\frac{z^k}{1+r^{2n+1}}dk\\&=\mathcal P\int_{L_2}\Big[\frac{1}{1-e^{2\pi ik}}\Big]\frac{z^k}{1+r^{2n+1}}dk-\mathcal P\int_{L_2}\Big[\frac{e^{2\pi ik}}{1-e^{2\pi ik}}\Big]\frac{z^k}{1+r^{2n+1}}dk.
\end{align*}
Let $L_+$ and $L_2$ be the contours shown in figure (\ref{L+L-Contours}). The contour $L_+$ travels along the $x$-axis from right to left, has semi-circles of radius $\epsilon$ centered on the non-negative integers, and terminates at $-q$. The contour $L_-$ travels along the $x$-axis from left to right, has semi-circles of radius $\epsilon$ centered on the non-negative integers,
and starts at $-q$. Let the semi-circles of radius $\epsilon$ centered at $n$ on the contour $L_+$ be labeled $C_{n\epsilon}^+$, and let the semi-circles of radius $\epsilon$ centered at $n$ on the contour $L_-$ be labeled $C_{n\epsilon}^-$. Then

\begin{align*}
    \lim_{\epsilon\rightarrow 0}\int_{L_-}\Big[&\frac{1}{1-e^{2\pi ik}}\Big]\frac{z^k}{1+r^{2n+1}}dk\\&=+\mathcal P\int_{L_2}\Big[\frac{1}{1-e^{2\pi ik}}\Big]\frac{z^k}{1+r^{2n+1}}dk+\sum_{n=0}^\infty\lim_{\epsilon\rightarrow0}\int_{C_{n\epsilon}^-}\frac{z^k}{1+r^{2n+1}}\Big[\frac{1}{1-e^{2\pi ik}}\Big]dk,
\end{align*}
and 
\begin{align*}
    \lim_{\epsilon\rightarrow 0}\int_{L_+}\Big[&\frac{e^{2\pi ik}}{1-e^{2\pi ik}}\Big]\frac{z^k}{1+r^{2n+1}}dk\\&=+\mathcal P\int_{L_2}\Big[\frac{e^{2\pi ik}}{1-e^{2\pi ik}}\Big]\frac{z^k}{1+r^{2n+1}}dk+\sum_{n=0}^\infty\lim_{\epsilon\rightarrow0}\int_{C_{n\epsilon}^+}\frac{z^k}{1+r^{2n+1}}\Big[\frac{e^{2\pi ik}}{1-e^{2\pi ik}}\Big]dk.
\end{align*}
 Now, using the fractional residue theorem, we compute that
\begin{align*}
    &\lim_{\epsilon\rightarrow0}\int_{C_{n\epsilon}^-}\frac{z^k}{1+r^{2n+1}}\Big[\frac{1}{1-e^{2\pi ik}}\Big]dk=\pi i \text{ Res}\Big [\frac{z^k}{1+r^{2n+1}}\frac{1}{1-e^{2\pi ik}},n\Big]\\&=\pi i \frac{z^n}{1+r^{2n+1}}\lim_{k\rightarrow n}\frac{(k-n)}{1-e^{2\pi ik}}=\frac{1}{2} \frac{z^n}{1+r^{2n+1}}.
\end{align*}
Likewise
\begin{align*}
    &\lim_{\epsilon\rightarrow0}\int_{C_{n\epsilon}^+}\frac{z^k}{1+r^{2n+3}}\Big[\frac{e^{2\pi ik}}{1-e^{2\pi ik}}\Big]dk=\frac{1}{2} \frac{z^n}{1+r^{2n+3}}.
\end{align*}
Define the contours $\mathcal L_+$ and $\mathcal L_-$ as $L_- \equiv L_1 \cup L_-$ and $\mathcal L_+ \equiv L_+ \cup L_3$ pictured in figure (\ref{ScriptL_L+contours}).

\begin{figure} 
\centering\includegraphics[width=0.6\textwidth]{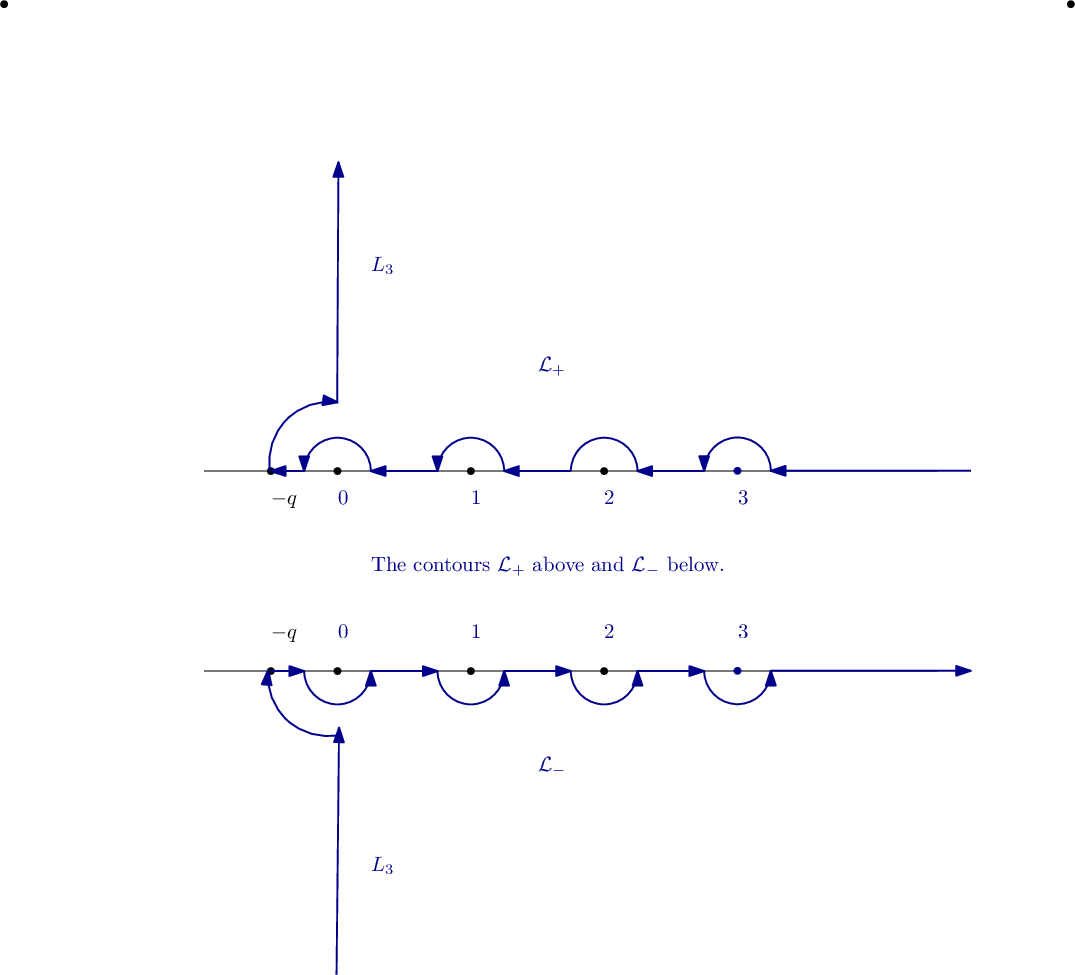}

    \caption{The contours $\mathcal L_+$ and $\mathcal L_-$ } \label{ScriptL_L+contours}
\end{figure}

\noindent Thus
\begin{align*}
    I=\int_{\mathcal L_-}\frac{1}{1+r^{2k+1}}\Big[ \frac{1}{1-e^{2\pi ik}}\Big]z^kdk+\sum_{n=0}^\infty  \frac{z^n}{1+r^{2n+1}}+\int_{\mathcal L_+}\frac{1}{1+r^{2k+1}}\Big[ \frac{e^{2\pi ik}}{1-e^{2\pi ik}}\Big]z^kdk.
\end{align*}
Note that $1+r^{2k+1}=0$ for $k\in\mathbb \C$ when 
\begin{align*}
    k=-\frac{1}{2}+i\frac{(2n+1)\pi}{2\ln(r)}, \hspace{1cm} n\in\mathbb Z.
\end{align*}
% Indeed,
% \begin{align*}
%     r^{2k+1}+1=0 \Longleftrightarrow r^{2k+1}=-1 \Longleftrightarrow e^{\log(r)(2k+1)}=e^{i\pi(1+2n)}.
% \end{align*}
% Thus
% \begin{align*}
%    \log(r)(2k+1)= i\pi(1+2n).
% \end{align*}
% Now solve for $k$:
% \begin{align*}
%      k=-\frac{1}{2}+i\frac{(2n+1)\pi}{2\ln(r)}, \hspace{1cm} n\in\mathbb Z.
% \end{align*}
Next, for $k=ix$ with $x\in \R$ we have
\begin{align*}
    \Big|\frac{1}{1+r^{2k+1}}\Big|=\Big|\frac{1}{1+r^{2ix}r}\Big|\leq\frac{1}{1-r}.
\end{align*}
Thus the integrals over $L_1$ and $L_3$ exponentially decay. Next, let $z=x+iy$ with $x>0$. Then
\begin{align*}
     \Big|\frac{1}{1+r^{2k+1}}\Big|= \Big|\frac{1}{1+r^{2x}r^{2yi}r}\Big|. 
\end{align*}
Given that $x>0$, it follows that 
\begin{align*}
    \Big|\frac{1}{1+r^{2x}r^{2yi}r}\Big|\rightarrow 1
\end{align*}
 as $|k|\rightarrow \infty$ when $z$ is in the first and fourth quadrant. Next, note that $\frac{1}{1+r^{2k+1}}\Big[ \frac{1}{1-e^{2\pi ik}}\Big]z^k$ is analytic in the fourth quadrant and $\frac{1}{1+r^{2k+1}}\Big[ \frac{e^{2\pi ik}}{1-e^{2\pi ik}}\Big]z^k$ is analytic in the first quadrant, so both integral vanish. 
Repeating this argument for 
\begin{align*}
    \frac{r}{z\overline \zeta}\sum_{n=0}^{\infty}\Big(\frac{r^2}{z\overline \zeta}\Big)^n\frac{1}{1+r^{2n+1}}
\end{align*}
gives (\ref{specdecoms2}).

\subsection{Appendix B}\label{appendixB}
We prove equation (\ref{transformAnnulus}) (the line below) under the conditions of Theorem (\ref{transformAnnulus}):
\begin{align*}
    &f(z)=\int_{\partial \mathbb A_r}f(\zeta)\overline{S(\zeta,z)}d\sigma(\zeta) \\&=\frac{1}{2\pi}\Big[ \int_{\partial \mathbb A_r}\int_{L_1}f(\zeta) \frac{\gamma(z\overline{\zeta},k)}{1-e^{2\pi ik}}dkd\zeta+\int_{\partial \mathbb A_r}\int_{L_2}\gamma(z\overline{\zeta},k)dkd\zeta
    +\int_{\partial \mathbb A_r}\int_{L_3}\frac{e^{2\pi ik}}{1-e^{2\pi ik}}\gamma(z\overline{\zeta},k)dkd\zeta\Big].
\end{align*}
The following computation will completed for 
\begin{align*}
    \int_{\partial \mathbb A_r}\int_{L_2}f(\zeta)\gamma(z\overline{\zeta},k)dkd\zeta,
\end{align*}
and the other computations are similar. First note that
\begin{align*}
    &\int_{\partial \mathbb A_r}\int_{L_2}\gamma(z\overline{\zeta},k)dkd\sigma(\zeta)=\int_{L_2}\int_{\partial \mathbb A_r}f(\zeta)\frac{1}{1+r^{2k+1}}\Big((z\overline{\zeta})^k+\frac{r^{2k+1}}{(z\overline{\zeta})^{k+1}}\Big)d\sigma(\zeta) dk
    \\&=\int_{L_2}\frac{1}{1+r^{2k+1}}\Big[\int_{\partial\D}f(\zeta)\Big((z\overline{\zeta})^k+\frac{r^{2k+1}}{(z\overline{\zeta})^{k+1}}\Big)d\sigma(\zeta)+\int_{D_r(0)}f(\zeta)\Big((z\overline{\zeta})^k+\frac{r^{2k+1}}{(z\overline{\zeta})^{k+1}}\Big)d\sigma(\zeta)\Big].
\end{align*}
On $\partial \D$, we have $\zeta_1=\zeta_1(t)=e^{it}$. Thus $\overline{\zeta_1}=1/\zeta_1$ and 

\begin{align*}
    d\sigma(\zeta_1)=\overline{T_{\partial \D}(\zeta_1)}d\zeta_1=\overline{ie^{i t}}d\zeta_1=-i\overline{\zeta_1(t)}d\zeta_1=-i\overline\zeta_1 d\zeta_1=\frac{1}{i\zeta_1}d\zeta_1.
\end{align*}
Thus

\begin{align*}
    &\int_{\partial\D}f(\zeta)\Big((z\overline{\zeta})^k+\frac{r^{2k+1}}{(z\overline{\zeta})^{k+1}}\Big)d\sigma(\zeta)=\frac{1}{i}\int_{\partial\D}f(\zeta)\frac{z^k}{\zeta^{k+1}}d\zeta+\frac{r^{2k+1}}{i}\int_{\partial\D}\frac{f(\zeta)}{z^{k+1}}\zeta^k d\zeta
    \\&=\frac{1}{i}z^k\rho_{11}(k)+\frac{1}{iz^{k+1}}\rho_{21}(k).
\end{align*}
On $\partial D_r(0)$, we have $\zeta_2(t)=-re^{it}$. Thus
\begin{align*}
    d\sigma(\zeta_2)=\overline{T_{\partial \D}(\zeta_2)}d\zeta_2=\frac{\overline{-ire^{i t}}}{r}d\zeta_2(t)=ie^{-it}d\zeta_2(t)=-i\frac{\overline{\zeta_2(t)}}{r}d\zeta_2(t)=-i\frac{\overline\zeta_2}{r} d\zeta_2=\frac{r}{i\zeta_2}d\zeta_2.
\end{align*}
Also note that $\overline{\zeta_2}=\overline{-re^{it}}=r^2/(-re^{it})=r^2/\zeta_2$. Thus
\begin{align*}
    &\int_{D_r(0)}f(\zeta)\Big((z\overline{\zeta})^k+\frac{r^{2k+1}}{(z\overline{\zeta})^{k+1}}\Big)d\sigma(\zeta)=\frac{1}{i}\int_{D_r(0)}f(\zeta)z^k\frac{r^{2k}}{\zeta^k}\frac{r}{\zeta}d\zeta+\frac{1}{i}\int_{D_r(0)}f(\zeta)\frac{r^{2k+1}}{z^{k+1}}\frac{\zeta^{k+1}}{r^{2k+2}}\frac{r}{\zeta}d\zeta
    \\&=\frac{r^{2k+1}}{i}\int_{D_r(0)}f(\zeta)\frac{z^k}{\zeta^{k+1}}d\zeta+\frac{1}{i}\int_{D_r(0)}\frac{f(\zeta)}{z^{k+1}}\zeta^{k}d\zeta
    \\&=\frac{1}{i}z^k\rho_{22}(-k-1)+\frac{1}{iz^{k+1}}\rho_{12}(-k-1).
\end{align*}
Thus
\begin{align*}
    &\frac{1}{2\pi}\int_{\partial \mathbb A_r}\int_{L_2}f(\zeta)\gamma(z\overline{\zeta},k)dkd\zeta\\&=\frac{1}{2\pi i}\int_{L_2}\frac{1}{1+r^{2k+1}}\Big[z^k\rho_{11}(k)+\frac{1}{z^{k+1}}\rho_{21}(k)+z^k\rho_{22}(-k-1)+\frac{1}{z^{k+1}}\rho_{12}(-k-1)\Big]dk
    \\&=\frac{1}{2\pi i}\int_{L_2}\frac{1}{1+r^{2k+1}}\Big[z^k(\rho_{11}(k)+\rho_{22}(-k-1))+\frac{1}{z^{k+1}}(\rho_{21}(k)+\rho_{12}(-k-1))\Big]dk.
\end{align*}

\bibliography{bib}

@book{Bell:1992,
	address = {New York},
	author = {S. R. Bell},
	edition = {2nd},
	publisher = {Chapman and Hall/CRC},
	title = {The Cauchy transform, potential theory, and conformal mapping},
	year = 2015}

@article{FO97,
	author = {A.S. Fokas.},
	journal = {Proceedings of the Royal Society},
	number = {},
    issue = {1962},
	pages = {1411–1443},
	title = {A Unified transform method for solving linear and certain Nonlinear PDEs.},
	volume = {453},
	year = {1997}}

@article{FK03,
	author = {A.S. Fokas and A.A. Kapaev.},
	journal = {IMA Journal of Applied Mathematics},
	number = {},
    issue = {4},
	pages = {355-408},
	title = {On the transform method for the Laplace equation in a polygon.},
	volume = {68},
	year = {2003}}

@article{CL18,
	author = {Elena Luca and Daren G. Growdy},
	journal = {IMA Journal of Applied Mathematics},
	number = {},
    issue = {6},
	pages = {942-972},
	title = {A transform method for the biharmonic equation in multiply connected circular domains},
	volume = {83},
	year = {2018}}

@book{PD70,
	author = {Peter Duren},
	journal = {Academic Press},
	number = {},
	pages = {},
	title = {Theory of $H^p$ Spaces},
	volume = {New York},
    publisher = {Dover Publications},
	year = {1970}}

@article{SF10,
	author = {E. Spence and A. S. Fokas},
	journal = {Proceedings of the Royal Society A},
	pages = {2259--2281},
	title = {A new transform method I: domain-dependent fundamental solutions and integral representations},
	volume = {466},
	year = {2010}}

@article{DB14,
	author = {C Davis and B Fornberg},
	journal = {Complex Variables and Elliptic Equations},
	number = {},
    issue = {4},
	pages = { 564-577},
	title = {A spectrally accurate numberical implementation of the Fokas transform method for Helmholtz-type PDEs},
	volume = {59},
	year = {2014}}

@article{DF15,
	author = {M Dimakos and AS Fokas},
	journal = {Studies in Applied Mathemtics},
	number = {},
    issue = {4},
	pages = {456-498},
	title = {The Poisson and the biharmonic equations in the interior of a convex polygon},
	volume = {134},
	year = {2015}}

@article{CH21,
	author = {D Crowdy and J Hauge},
	journal = {IMA Journal of Applied mathmatics},
	number = {},
    issue = {6},
	pages = {1287-1326},
	title = {A new approach to the complex Helmholtz equation with application to diffusion wave fileds, impedance spectroscopy and unsteady Stokes flow},
	volume = {86},
	year = {2021}}

@article{HLLL24,
	author = {J. J. Hulse and L. Lanzani and S. G. {Llewellyn Smith} and E. Luca},
	
	journal = {IMA Journal of Applied Mathematics},
	pages = {574--597},
	title = {A transform-based technque for solving boundary value problesm on convex planar domains},
	volume = {89},
	year = {2024}}

@book{GM09,
	address = {},
	author = {G Gasper and M Rahman},
	edition = {2nd},
	publisher = {Cambrdige University Press},
Pages = {},
	Series = {Encyclodpedia of Mathematics and its Applications},
	Title = {Basic Hypergeometric Series},
	year = 2009}

@article{C15,
	author = {D G Crowdy},
	journal = {Comput. Methods Funct.
            Th.},
	number = {},
	pages = { 655 – 687},
        issue = {4},
	title = {Fourier–Mellin transforms for circular domains.},
	volume = {15},
	year = {2015}}

@article{HLLS25,
	author = {J. Hulse, L. Lanzani. S. Llewellyn Smith, E. Luca.},
	journal = {Proceedings of the Royal Society A.},
	number = {},
	pages = { },
        issue = {2319},
	title = {The Unified Transform Method: Beyound Circulr or Convex Domains},
	volume = {481},
	year = {2025}}

@article{DC09,
	author = {D.G. Crowdy},
	journal = {The Schwarz problem in multiply connected domains and the Schottky–Klein prime
function},
	number = {},
	pages = {221–236},
        issue = {},
	title = {Complex Var. Elliptic Equ.},
	volume = {53},
	year = {2008}}

@article{C15B,
	author = {D G Crowdy},
	journal = {IMA Journal of Applied Mathematics},
	number = {},
	pages = {1902-1931},
        issue = {},
	title = {A transform method for Laplace’s equation in multiply connected
circular domains},
	volume = {80},
	year = {2015}}
\bibliographystyle{plain}

\end{document}